%%
%% This is file `elsarticle-template-harv.tex',
%% generated with the docstrip utility.
%%
%% The original source files were:
%%
%% elsarticle.dtx  (with options: `harvtemplate')
%%
%% Copyright 2007, 2008 Elsevier Ltd.
%%
%% This file is part of the 'Elsarticle Bundle'.
%% -------------------------------------------
%%
%% It may be distributed under the conditions of the LaTeX Project Public
%% License, either version 1.2 of this license or (at your option) any
%% later version.  The latest version of this license is in
%%    http://www.latex-project.org/lppl.txt
%% and version 1.2 or later is part of all distributions of LaTeX
%% version 1999/12/01 or later.
%%
%% The list of all files belonging to the 'Elsarticle Bundle' is
%% given in the file `manifest.txt'.
%%
%% Template article for Elsevier's document class `elsarticle'
%% with harvard style bibliographic references
%% SP 2008/03/01

\documentclass[3p,10pt]{elsarticle}

%% Use the option review to obtain double line spacing
%% \documentclass[authoryear,preprint,review,12pt]{elsarticle}

%% Use the options 1p,twocolumn; 3p; 3p,twocolumn; 5p; or 5p,twocolumn
%% for a journal layout:
%% \documentclass[final,1p,times]{elsarticle}
%% \documentclass[final,1p,times,twocolumn]{elsarticle}
%% \documentclass[final,3p,times]{elsarticle}
%% \documentclass[final,3p,times,twocolumn]{elsarticle}
%% \documentclass[final,5p,times]{elsarticle}
%% \documentclass[final,5p,times,twocolumn]{elsarticle}

%% if you use PostScript figures in your article
%% use the graphics package for simple commands
%% \usepackage{graphics}
%% or use the graphicx package for more complicated commands
%% \usepackage{graphicx}
%% or use the epsfig package if you prefer to use the old commands
%% \usepackage{epsfig}

%% The amssymb package provides various useful mathematical symbols
\usepackage{amssymb,latexsym,amsmath,color,subfigure,amsthm}
%\hypersetup{colorlinks,breaklinks,linkcolor=blue,urlcolor=blue,anchorcolor=blue,citecolor=blue}
%\renewcommand*\familydefault{\sfdefault}
%\usepackage[pdftex]{graphicx}
%% The amsthm package provides extended theorem environments
%% \usepackage{amsthm}

%% The lineno packages adds line numbers. Start line numbering with
%% \begin{linenumbers}, end it with \end{linenumbers}. Or switch it on
%% for the whole article with \linenumbers.
%% \usepackage{lineno}

\journal{}

\newcommand{\set}[1]{\left\{#1\right\}}
\newcommand{\abs}[1]{\left|#1\right|}
\newcommand{\p}{\partial}

\newcommand{\mc}{\mathbf{c}}
\newcommand{\md}{\mathbf{d}}

\newcommand{\mt}{\mathbf{t}}
\newcommand{\mx}{\mathbf{x}}

\newcommand{\vt}{\boldsymbol{\theta}}
\newcommand{\vv}{\boldsymbol{\varphi}}

\theoremstyle{plain}
\newtheorem{thm}{Theorem}[section]
\newtheorem{lem}[thm]{Lemma}
\newtheorem{cor}[thm]{Corollary}

\theoremstyle{remark}
\newtheorem{rem}{Remark}[section]
\newtheorem{ex}{Example}[section]

\begin{document}

\begin{frontmatter}

%% Title, authors and addresses

%% use the tnoteref command within \title for footnotes;
%% use the tnotetext command for theassociated footnote;
%% use the fnref command within \author or \address for footnotes;
%% use the fntext command for theassociated footnote;
%% use the corref command within \author for corresponding author footnotes;
%% use the cortext command for theassociated footnote;
%% use the ead command for the email address,
%% and the form \ead[url] for the home page:
%% \title{Title\tnoteref{label1}}
%% \tnotetext[label1]{}
%% \author{Name\corref{cor1}\fnref{label2}}
%% \ead{email address}
%% \ead[url]{home page}
%% \fntext[label2]{}
%% \cortext[cor1]{}
%% \address{Address\fnref{label3}}
%% \fntext[label3]{}

\title{Direct sampling method for retrieving small perfectly conducting cracks}

%% use optional labels to link authors explicitly to addresses:
%% \author[label1,label2]{}
%% \address[label1]{}
%% \address[label2]{}

\author{Won-Kwang Park}
\ead{parkwk@kookmin.ac.kr}
\address{Department of Information Security, Cryptology, and Mathematics, Kookmin University, Seoul, 02707, Korea}

\begin{abstract}
We consider direct sampling method for finding location of a set of linear perfectly conducting cracks with small length from collected far-field data corresponding to the single incident field. To show the feasibility of direct sampling method, we first prove that the indicator function of direct sampling method can be represented by the Bessel function of order zero and the length of cracks. Results of numerical simulations are shown to support the fact that the imaging performance is highly depending on the length of cracks. To explain the fact that imaging performance is highly depending on the rotation of crack, we perform further analysis of direct sampling method by establishing a representation by the Bessel functions of order zero and one.
\end{abstract}

\begin{keyword}
Direct sampling method \sep perfectly conducting crack \sep Bessel functions \sep numerical experiments

%% keywords here, in the form: keyword \sep keyword

%% PACS codes here, in the form: \PACS code \sep code

%% MSC codes here, in the form: \mxC code \sep code
%% or \mxC[2008] code \sep code (2000 is the default)
\end{keyword}

\end{frontmatter}

%% \linenumbers

%% main text

%% The Appendices part is started with the command \appendix;
%% appendix sections are then done as normal sections
%% \appendix

%% \section{}
%% \label{}

\section{Introduction}
This work concerned on direct sampling method for a fast imaging of small, linear perfectly conducting cracks located in two-dimensional space $\mathbb{R}^2$. It is well-known that direct sampling method is a fast, simple and effective imaging technique. Furthermore, it requires only a few (one or two) incident fields and does not requires additional operations (e.g., singular value decomposition, solving adjoint problems or ill-posed integral equations, etc.). Due to this reason, it applied to many inverse scattering problems \cite{CILZ,CIZ,IJZ1,IJZ2,LZ}.

Based on these studies, it turns out that direct sampling method is an effective in full-view inverse scattering problem. Specially, based on the relationship between Bessel function of order zero and the indicator function of direct sampling method \cite{IJZ1,LZ}, the reason of detection of targets has been investigated. However, the analysis is not fully reliable in the imaging of cracks, for example, cracks whose lengths are significantly smaller than those of the others are theoretically undetectable and identified location is different corresponding to the direction of propagation. Hence, a further analysis of indicator function of direct sampling method still needs to be performed, which is the motivation for our work.

In this paper, we carefully identify mathematical structure of indicator function of direct sampling method. In detail, we prove that the indicator function can be represented by the Bessel functions of order zero and one, length and rotation of cracks, and the direction of propagation. This is based on the fact that the far-field pattern can be represented by the asymptotic expansion formula in the presence of small, linear perfectly conducting cracks (see \cite{AKLP} for instance). From the identified structure, we explain the reason of unexplained phenomenon and find two methods of improvement by applying multiple incident fields and frequencies. Throughout careful analysis and numerical experiments, we demonstrate the improvement of direct sampling method theoretically and numerically.

This paper is organized as follows. In Section \ref{sec:2}, we survey two-dimensional forward problem, asymptotic expansion formula due to the existence of small cracks, and indicator function of direct sampling method. In Section \ref{sec:3}, we carefully identify the structure of indicator function by establishing a relationship with Bessel functions of order zero and one, length and rotation of cracks, and the incident field direction to explain the identification of direct sampling method is highly depending on not only the length and rotation of cracks but also the direction of incident field. To support identified structure, several results of numerical simulations exhibited. In Section \ref{sec:4}, we introduce two methods of improvement by applying multiple directions of incident fields and multiple frequencies. Furthermore, we perform numerical simulations to examine the improvement. Section \ref{sec:5} contains a short conclusion and some remarks on future work.

\section{Forward problem and direct sampling method}\label{sec:2}
\subsection{Two-dimensional forward problem and far-field pattern}
In this section, we introduce the two-dimensional direct scattering problem for $M$ different, well-separated linear perfectly conducting cracks of length $2\ell_m$, denoted by $\Sigma_m$, $m=1,2,\cdots,M$, located in the homogeneous space $\mathbb{R}^2$. For a more detailed description, we recommend \cite{K}. Throughout this study, we denote $\Sigma_m$ as
\[\Sigma_m=\set{\mc_m=\mathcal{R}_{\phi}[x_m,y_m]^\mathrm{T}:-\ell_m\leq x_m\leq\ell_m},\]
for $m=1,2,\cdots,M$, and let $\Sigma$ be the collection of cracks. Here $\mc_m$ is the center of $\Sigma_m$ and $\mathcal{R}_{\phi}$ denotes rotation by $\phi$. We assume that the $\Sigma_m$ are sufficiently separated from each other such that
\[k|\mc_m-\mc_{m'}|\gg1-\frac14=\frac34,\]
where $k$ denotes the positive wavenumber, which is of the form $k=2\pi/\lambda$. Here, $\lambda$ is the given wavelength and assume that $2\ell_m\ll\lambda$ and $k\ell_m\rightarrow0+$ for all $m=1,2,\cdots,M$. In this study, following from \cite{LZ}, we consider the plane-wave illumination: let $\psi_{\mathrm{inc}}(\mx,\md)=e^{ik\md\cdot\mx}$ be the given incident field with fixed propagation direction $\md\in\mathbb{S}^1$. Here $\mathbb{S}^1$ denotes the two-dimensional unit circle centered at the origin. And let $\psi(\mx,\md)$ be the time-harmonic total field that satisfies the following Helmholtz equation
\begin{equation}\label{HelmholtzEquation}
  \triangle \psi(\mx,\md)+k^2\psi(\mx,\md)=0\quad\mbox{in}\quad\mathbb{R}^2\backslash\overline{\Sigma}
\end{equation}
with Dirichlet boundary condition
\begin{equation}\label{BoundaryCondition}
  \psi(\mx,\md)=0\quad\mbox{on}\quad\Sigma.
\end{equation}

Let $\psi_{\mathrm{scat}}(\mx,\md)=\psi(\mx,\md)-\psi_{\mathrm{inc}}(\mx,\md)$ be the scattered field $\psi_{\mathrm{scat}}(\mx,\md)$ that satisfy the Sommerfeld radiation condition
\[\lim_{\abs{\mx}\to\infty}\sqrt{\abs{\mx}}\left(\frac{\p \psi_{\mathrm{scat}}(\mx,\md)}{\p\abs{\mx}}-ik\psi_{\mathrm{scat}}(\mx,\md)\right)=0\]
uniformly in all directions $\vt=\mx/\abs{\mx}$. We denote $\psi_\infty(\vt,\md)$ as the far-field pattern of the $\psi_{\mathrm{scat}}(\mx,\md)$ that satisfies
\[\psi_{\mathrm{scat}}(\mx,\md)=\frac{e^{ik|\mx|}}{\sqrt{|\mx|}}\left\{\psi_\infty(\vt,\md)+\mathcal{O}\left(\frac{1}{|\mx|}\right)\right\},\quad|\mx|\longrightarrow+\infty\]
uniformly in all directions $\vt=\mx/|\mx|\in\mathbb{S}^1$. Based on \cite{K}, $\psi_\infty(\vt,\md)$ can be represented as the following single-layer potential with unknown density function $\varphi(\mc_m,\md)$:
\begin{equation}\label{FarFieldPattern}
\psi_\infty(\vt,\md)=-\frac{1+i}{4\sqrt{\pi k}}\sum_{m=1}^{M}\int_{\Sigma_m}e^{-ik\vt\cdot\mc_m}\varphi(\mc_m,\md)d\mc_m.
\end{equation}
Based on \cite{AKLP}, the far-field pattern $\psi_\infty(\vt,\md)$ can be represented as the following asymptotic expansion formula, which plays a key role in the analysis of the imaging function of the direct sampling method.

\subsection{Indicator function of direct sampling method}
Now, we briefly introduce the indicator function of direct sampling method for finding location of $\Sigma_m$ from a set of measured far-field pattern data
\[\Psi:=\set{\psi_\infty(\vt_n,\md):n=1,2,\cdots,N}.\]
Throughout this paper, we assume that total number of $N$ is sufficiently large and consider the full-view inverse scattering problem, i.e., we set
\[\vt_n=\bigg[\cos\frac{2\pi n}{N},\sin\frac{2\pi n}{N}\bigg]^T.\]
Then, for a search point $\mx\in\mathbb{R}^2$, the indicator function of direct sampling method is given by
\begin{equation}\label{ImagingFunction}
  \mathcal{I}(\mx):=\frac{|\langle \psi_\infty(\vt_n,\md),e^{-ik\vt_n\cdot\mx}\rangle|}{||\psi_\infty(\vt_n,\md)||_{L^2(\mathbb{S}^1)}||e^{-ik\vt_n\cdot\mx}||_{L^2(\mathbb{S}^1)}},
\end{equation}
where
\[\langle f_1,f_2\rangle:=\sum_{n=1}^{N}f_1\overline{f}_2\quad\mbox{and}\quad||f||_{L^2(\mathbb{S}^1)}=\sqrt{\langle f,f\rangle}.\]
Following \cite{LZ}, it has been confirmed that $\mathcal{I}(\mx)$ satisfies the relation
\begin{equation}\label{TraditionalDSM}
\mathcal{I}(\mx)\approx\sum_{m=1}^{M}J_0(k|\mx-\mc_m|).
\end{equation}
This means that $\mathcal{I}(\mx)$ plots peaks of magnitude $1$ at $\mx=\mc_m$ and has small magnitude elsewhere so that location of $\mc_m$ can be identified via the map of $\mathcal{I}(\mx)$. Here $J_0$ denotes the Bessel function of the first kind of order zero.

On the basis of the relation (\ref{TraditionalDSM}), the feasibility of direct sampling method can be explained. However, following two phenomenon can be observed through the simulation but the the reason of phenomenon not be explained theoretically:
\begin{enumerate}
\item the value of $\mathcal{I}(\mx)$ is highly depending on the length $\ell_m$ of $\Sigma_m$, refer to Figure \ref{DSM-Length2}.
\item if $\ell_m$ are same, the value of $\mathcal{I}(\mx)$ is highly depending on the rotation $\mathcal{R}_\phi$, refer to Figure \ref{DSM-Incident}.
\end{enumerate}
Motivated by this, we carefully analyze the indicator function to explain unexpected results.

\section{Structure analysis of indicator function}\label{sec:3}
\subsection{Analysis of indicator function: dependency of the length of cracks}\label{sec3-1}
First, we explore the structure of indicator function by establishing a relationship with Bessel function of order zero and length of cracks. For this, we adopt an asymptotic expansion formula due to the presence of $\Sigma_m$, refer to \cite{AKLP}. This plays a key role of our analysis.

\begin{lem}[Asymptotic expansion formula]\label{LemmaAsymptotic1} If $\psi(\mx,\md)$ satisfies (\ref{HelmholtzEquation}) and (\ref{BoundaryCondition}), and $\psi_{\mathrm{inc}}(\mx,\md)=e^{ik\md\cdot\mx}$, then following asymptotic expansion formula holds for $0<\ell_m<2$ and $\ell_m\ll\lambda/2$:
  \begin{equation}\label{Asymptotic1}
    \psi_\infty(\vt,\md)=\sum_{m=1}^{M}\frac{2\pi}{\ln(\ell_m/2)}e^{ik\md\cdot\mc_m}e^{-ik\vt\cdot\mc_m}+\mathcal{O}\left(\frac{1}{|\ln\ell_m|^2}\right).
  \end{equation}
\end{lem}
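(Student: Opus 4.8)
The plan is to derive (\ref{Asymptotic1}) directly from the single-layer representation (\ref{FarFieldPattern}) by pinning down the leading-order behavior of the density $\varphi(\cdot,\md)$ as $k\ell_m\to0+$. First I would use the Dirichlet condition (\ref{BoundaryCondition}) to write the boundary integral equation satisfied by $\varphi$, namely
\[
\sum_{m=1}^{M}\int_{\Sigma_m}\Phi_k(\mx,\mathbf{y})\varphi(\mathbf{y},\md)\,d\mathbf{y}=-e^{ik\md\cdot\mx},\qquad\mx\in\Sigma,
\]
where $\Phi_k(\mx,\mathbf{y})=\tfrac{i}{4}H_0^{(1)}(k\abs{\mx-\mathbf{y}})$ is the two-dimensional Helmholtz fundamental solution and $\mathbf{y}$ denotes the running point on $\Sigma_m$ (center $\mc_m$). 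Because the cracks are well separated, $k\abs{\mc_m-\mc_{m'}}\gg\tfrac34$, the self-interaction of each segment scales like $\abs{\ln\ell_m}$ while the interaction between distinct segments remains $\mathcal{O}(1)$; this lets the system decouple to leading order into $M$ independent single-segment problems, the cross-coupling entering only the remainder.

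Next I would insert the small-argument expansion
\[
\Phi_k(\mx,\mathbf{y})=-\frac{1}{2\pi}\left(\ln\frac{k\abs{\mx-\mathbf{y}}}{2}+\gamma\right)+\frac{i}{4}+\mathcal{O}\!\left((k\abs{\mx-\mathbf{y}})^2\ln(k\abs{\mx-\mathbf{y}})\right),
\]
which exhibits the logarithmic kernel $-\tfrac{1}{2\pi}\ln\abs{\mx-\mathbf{y}}$ as the dominant part on a segment of half-length $\ell_m\ll\lambda/2$. On the right-hand side I would freeze the incident field at the center via $e^{ik\md\cdot\mx}=e^{ik\md\cdot\mc_m}(1+\mathcal{O}(k\ell_m))$ for $\mx\in\Sigma_m$. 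The leading-order single-segment problem is then the classical logarithmic single-layer equation with constant data, whose solution is controlled through its total charge: one obtains
\[
\int_{\Sigma_m}\varphi(\mathbf{y},\md)\,d\mathbf{y}=\frac{C\,e^{ik\md\cdot\mc_m}}{\ln(\ell_m/2)}\left(1+\mathcal{O}\!\left(\frac{1}{\abs{\ln\ell_m}}\right)\right),
\]
with a constant $C$ independent of $m$. The factor $1/\ln(\ell_m/2)$ is exactly the reciprocal of the logarithmic self-energy of a segment of half-length $\ell_m$, so the additive constants $\gamma$ and $i/4$ only perturb the relative error.

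Finally I would substitute this back into (\ref{FarFieldPattern}). Over each small segment the measurement kernel obeys $e^{-ik\vt\cdot\mathbf{y}}=e^{-ik\vt\cdot\mc_m}(1+\mathcal{O}(k\ell_m))$, so it may be pulled out at the center, giving
\[
\psi_\infty(\vt,\md)=-\frac{1+i}{4\sqrt{\pi k}}\sum_{m=1}^{M}e^{-ik\vt\cdot\mc_m}\int_{\Sigma_m}\varphi(\mathbf{y},\md)\,d\mathbf{y}+\text{(remainder)}.
\]
Checking that the prefactor $-\tfrac{1+i}{4\sqrt{\pi k}}$ combines with $C$ to produce $2\pi$ yields the stated leading term $\sum_{m}\frac{2\pi}{\ln(\ell_m/2)}e^{ik\md\cdot\mc_m}e^{-ik\vt\cdot\mc_m}$. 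I would then collect every discarded contribution — cross-coupling between well-separated cracks, the $\mathcal{O}(k\ell_m)$ freezing errors of both exponentials, and the subleading terms of the kernel expansion — and verify that each lies one logarithmic order below the leading $1/\ln\ell_m$, hence $\mathcal{O}(1/\abs{\ln\ell_m}^2)$; here one uses $k\ell_m\ll1/\abs{\ln\ell_m}$, since the power decay of $k\ell_m$ outpaces the logarithm.

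The step I expect to be the main obstacle is the single-segment analysis: rigorously inverting the logarithmic single-layer operator on $\Sigma_m$ and extracting the total charge with the sharp constant $C/\ln(\ell_m/2)$ together with a clean $\mathcal{O}(1/\abs{\ln\ell_m}^2)$ remainder. This amounts to diagonalizing the log-kernel against the Chebyshev weight $1/\sqrt{\ell_m^2-x_m^2}$, on which the operator acts essentially diagonally, and then tracking how the perturbations $\gamma$, $i/4$ and the quadratic tail shift the leading eigenvalue — the delicate point being to confirm that the induced correction is genuinely smaller by a factor $1/\abs{\ln\ell_m}$ rather than $\mathcal{O}(1)$. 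The remaining estimates, namely the decoupling of well-separated cracks and the exponential-freezing errors, are routine once this charge asymptotics is in hand.
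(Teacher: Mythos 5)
The paper contains no proof of this lemma to compare against: the statement is imported verbatim from \cite{AKLP} (``we adopt an asymptotic expansion formula\ldots refer to \cite{AKLP}''), so your proposal can only be measured against the standard derivation in that literature --- and in substance it reconstructs exactly that argument, correctly. The chain you describe is the right one: the first-kind integral equation from the Dirichlet condition, the small-argument expansion of $\frac{i}{4}H_0^{(1)}$, decoupling of well-separated cracks (cross terms $\mathcal{O}(1)$ against self terms $\mathcal{O}(\abs{\ln\ell_m})$, hence a relative $\mathcal{O}(1/\abs{\ln\ell_m})$ perturbation), and inversion of the log kernel through the equilibrium (Chebyshev) density --- the logarithmic capacity of a segment of half-length $\ell_m$ being $\ell_m/2$ is precisely where $\ln(\ell_m/2)$ enters --- giving total charge proportional to $e^{ik\md\cdot\mc_m}/\ln(\ell_m/2)$ with relative error $\mathcal{O}(1/\abs{\ln\ell_m})$; your bookkeeping that the additive constants $\gamma$, $i/4$, $\ln k$ and the $\mathcal{O}(k\ell_m)$ freezing errors all enter one logarithmic order down, so the absolute remainder is $\mathcal{O}(1/\abs{\ln\ell_m}^2)$, is also correct. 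One concrete warning about the step you deferred: the constant check against the paper's normalization would in fact \emph{fail}. The charge asymptotics give $C=2\pi$, and $-\frac{1+i}{4\sqrt{\pi k}}\cdot 2\pi=-\frac{1+i}{2}\sqrt{\frac{\pi}{k}}\neq 2\pi$, so combining (\ref{FarFieldPattern}) with the standard charge yields a leading coefficient $-\frac{1+i}{2}\sqrt{\pi/k}\,\frac{1}{\ln(\ell_m/2)}$ rather than the $\frac{2\pi}{\ln(\ell_m/2)}$ stated in (\ref{Asymptotic1}). Unlike the additive constants inside the logarithm, this multiplicative prefactor cannot be absorbed into the $\mathcal{O}(1/\abs{\ln\ell_m}^2)$ remainder; it reflects a mismatch of far-field/density conventions between (\ref{FarFieldPattern}) and the lemma as transcribed from \cite{AKLP}, not a flaw in your reasoning (and it is harmless for the paper downstream, since the indicator function (\ref{ImagingFunction}) normalizes any overall constant away). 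To land exactly on (\ref{Asymptotic1}) you would need to state the density or far-field convention in which that prefactor is absorbed, rather than leaving the constant as an unverified check.
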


Following result is useful to explore the structure. A rigorous derivation is in \cite{P-SUB3}.
\begin{lem}\label{TheoremBessel1}
  For a sufficiently large $N$, $\vt_n,\vt\in\mathbb{S}^1$, and $\mx\in\mathbb{R}^2$, the following relation holds:
  \[\sum_{n=1}^{N}e^{ik\vt_n\cdot\mx}=\int_{\mathbb{S}^1}e^{ik\vt\cdot\mx}d\vt=2\pi J_0(k|\mx|).\]
\end{lem}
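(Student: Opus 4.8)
The plan is to prove the two equalities separately, treating the right-hand identity as the core analytic content and the left-hand one as a quadrature statement. First I would pass to polar coordinates. Writing $\vt=[\cos\theta,\sin\theta]^{\mathrm{T}}$ so that $d\vt=d\theta$ over $[0,2\pi)$, and $\mx=|\mx|[\cos\alpha,\sin\alpha]^{\mathrm{T}}$, the inner product becomes $\vt\cdot\mx=|\mx|\cos(\theta-\alpha)$. Since the integrand is $2\pi$-periodic in $\theta$, the shift $\theta\mapsto\theta+\alpha$ removes the dependence on $\alpha$, reducing the problem to
\[\int_{\mathbb{S}^1}e^{ik\vt\cdot\mx}d\vt=\int_0^{2\pi}e^{ik|\mx|\cos\theta}\,d\theta.\]

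To evaluate this I would invoke the Jacobi--Anger expansion $e^{iz\cos\theta}=\sum_{p=-\infty}^{\infty}i^pJ_p(z)e^{ip\theta}$, which converges uniformly in $\theta$ for each fixed $z=k|\mx|$. Integrating term by term over $[0,2\pi)$ is justified by this uniform convergence, and every exponential $e^{ip\theta}$ with $p\neq0$ integrates to zero, leaving only the $p=0$ contribution $2\pi J_0(z)$. This gives the second equality $\int_{\mathbb{S}^1}e^{ik\vt\cdot\mx}d\vt=2\pi J_0(k|\mx|)$. Alternatively, one could expand $e^{ik|\mx|\cos\theta}$ in its power series, interchange sum and integral, and match the even moments $\int_0^{2\pi}\cos^{2q}\theta\,d\theta$ against the series definition of $J_0$, the odd moments vanishing by symmetry.

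For the first equality I would regard $\sum_{n=1}^{N}e^{ik\vt_n\cdot\mx}$ as the $N$-point equispaced (trapezoidal) quadrature for the $2\pi$-periodic integrand $g(\theta)=e^{ik|\mx|\cos(\theta-\alpha)}$, with nodes $\theta_n=2\pi n/N$. Because $g$ is real-analytic and periodic, the trapezoidal rule converges with spectral, super-algebraic accuracy, so that $\frac{2\pi}{N}\sum_{n=1}^{N}g(\theta_n)\to\int_0^{2\pi}g(\theta)\,d\theta$ as $N\to\infty$. This is the precise sense in which the stated equality holds for sufficiently large $N$.

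The main obstacle is making this quadrature step quantitative and reconciling it with the normalization implicit in the statement. A clean error bound follows from the aliasing (Poisson-summation) identity: the trapezoidal error equals $2\pi\sum_{m\neq0}c_{mN}$, where $c_p=i^pJ_p(k|\mx|)e^{-ip\alpha}$ are the Fourier coefficients of $g$. Because $J_p(k|\mx|)$ decays super-algebraically in $|p|$ once $|p|>k|\mx|$, the aliased modes are negligible as soon as $N>k|\mx|$. Establishing this decay of $J_p$ and verifying its uniformity in $\mx$ over the search region is the one place requiring care; the remaining steps are routine once the integral representation of $J_0$ is in hand.
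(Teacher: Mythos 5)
Your proof is correct in substance, but note that the paper itself contains no proof of this lemma---it defers the derivation to \cite{P-SUB3}---so the relevant comparison is with the standard argument that reference supplies, which is essentially yours: polar coordinates, rotation invariance to eliminate the angle $\alpha$ of $\mx$, and the Jacobi--Anger expansion (the same expansion the paper later records as (\ref{Jacobi-Anger}) and uses in Theorem \ref{TheoremImprovement1}), with every mode $e^{ip\theta}$, $p\neq0$, integrating to zero and leaving $2\pi J_0(k|\mx|)$. What your write-up adds beyond what the paper invokes is the quantitative treatment of the discrete sum: the aliasing identity, which gives the trapezoidal error as $2\pi\sum_{m\neq0}i^{mN}J_{mN}(k|\mx|)e^{-imN\alpha}$, together with the bound $|J_p(z)|\leq(z/2)^{|p|}/|p|!$, converts the paper's vague ``sufficiently large $N$'' into a concrete threshold on the order of $N>k|\mx|$, uniform over any bounded search region. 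You are also right to press on the normalization: as literally stated the identity is false, since $\sum_{n=1}^{N}e^{ik\vt_n\cdot\mx}\approx NJ_0(k|\mx|)$, not $2\pi J_0(k|\mx|)$; the correct statement is for the weighted sum $\frac{2\pi}{N}\sum_{n=1}^{N}e^{ik\vt_n\cdot\mx}$. This discrepancy is harmless downstream, because the indicator (\ref{ImagingFunction}) is normalized so that the constant factor $N/(2\pi)$ cancels, but your version is the one that should actually be stated and proved.
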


By combining Lemmas \ref{LemmaAsymptotic1} and \ref{TheoremBessel1}, we can obtain the following structure of indicator function. The result is follows.

\begin{thm}[Structure of indicator function]\label{TheoremStructure1}
Assume that total number of observation direction $N$ is sufficiently large. Then, $\mathcal{I}(\mx)$ can be represented as
\begin{equation}\label{Structure1}
\mathcal{I}(\mx)\approx\abs{\sum_{m=1}^{M}\frac{J_0(k|\mx-\mc_m|)}{\ln(\ell_m/2)}}\left(\max\left|\sum_{m=1}^{M}\frac{1}{\ln(\ell_m/2)}\right|\right)^{-1}.
\end{equation}
\end{thm}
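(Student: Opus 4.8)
The plan is to evaluate the numerator bilinear form and the two normalizing factors in the definition (\ref{ImagingFunction}) of $\mathcal{I}(\mx)$ separately, using Lemma \ref{LemmaAsymptotic1} to replace $\psi_\infty(\vt_n,\md)$ by its leading-order term and Lemma \ref{TheoremBessel1} to collapse the resulting sums over the observation directions into Bessel functions of order zero. First I would substitute the asymptotic formula (\ref{Asymptotic1}) into $\langle\psi_\infty(\vt_n,\md),e^{-ik\vt_n\cdot\mx}\rangle=\sum_{n=1}^{N}\psi_\infty(\vt_n,\md)e^{ik\vt_n\cdot\mx}$, interchange the finite sums over $m$ and $n$, and isolate the phase $e^{ik\vt_n\cdot(\mx-\mc_m)}$. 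Applying Lemma \ref{TheoremBessel1} with $\mx$ replaced by $\mx-\mc_m$ converts $\sum_{n}e^{ik\vt_n\cdot(\mx-\mc_m)}$ into $2\pi J_0(k|\mx-\mc_m|)$, so that, up to the $\mathcal{O}(|\ln\ell_m|^{-2})$ remainder carried along from (\ref{Asymptotic1}),
\[\abs{\langle\psi_\infty(\vt_n,\md),e^{-ik\vt_n\cdot\mx}\rangle}\approx(2\pi)^2\abs{\sum_{m=1}^{M}\frac{e^{ik\md\cdot\mc_m}}{\ln(\ell_m/2)}J_0(k|\mx-\mc_m|)}.\]

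Next I would analyze the denominator. The factor $\|e^{-ik\vt_n\cdot\mx}\|_{L^2(\mathbb{S}^1)}$ is independent of $\mx$ since $|e^{-ik\vt_n\cdot\mx}|=1$, and $\|\psi_\infty(\vt_n,\md)\|_{L^2(\mathbb{S}^1)}$ carries no $\mx$ dependence at all; expanding $|\psi_\infty(\vt_n,\md)|^2$ into a double sum over $m,m'$ and applying Lemma \ref{TheoremBessel1} to each $\sum_{n}e^{-ik\vt_n\cdot(\mc_m-\mc_{m'})}$ produces weights $2\pi J_0(k|\mc_m-\mc_{m'}|)$, whereupon the well-separatedness hypothesis $k|\mc_m-\mc_{m'}|\gg 3/4$ together with the decay of $J_0$ lets me discard the off-diagonal ($m\neq m'$) terms. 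The decisive structural point is that both normalizing factors are thus constants in the search variable, so $\mathcal{I}(\mx)$ equals, modulo the controlled remainder, a fixed positive multiple of $\abs{\sum_{m}(\ln(\ell_m/2))^{-1}e^{ik\md\cdot\mc_m}J_0(k|\mx-\mc_m|)}$.

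To fix that multiplicative constant I would exploit the Cauchy--Schwarz normalization built into (\ref{ImagingFunction}), which forces $0\le\mathcal{I}(\mx)\le 1$ with the value $1$ attained at the peaks of the numerator; hence the constant must be the reciprocal of the maximum of the numerator over $\mx$. Evaluating that maximum at a crack center $\mx=\mc_{m_0}$, where well-separatedness again annihilates $J_0(k|\mc_{m_0}-\mc_m|)$ for $m\neq m_0$ and the single surviving term has unit-modulus phase, both reduces the normalization to $(\max\abs{\sum_{m}(\ln(\ell_m/2))^{-1}})^{-1}$ and eliminates the incident phase $e^{ik\md\cdot\mc_m}$ from the magnitude, which yields (\ref{Structure1}). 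I expect this final step to be the main obstacle: away from the exact centers the several Bessel terms interfere with mutually distinct phases, so identifying the $\mx$-independent denominator with $\max\abs{\sum_m(\ln(\ell_m/2))^{-1}}$ and suppressing the factors $e^{ik\md\cdot\mc_m}$ is only approximate, and quantifying exactly this interference is what necessitates the finer analysis with $J_1$ and the propagation direction carried out later in the paper.
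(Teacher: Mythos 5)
Your argument is correct and follows essentially the same route as the paper: the identical substitution of the asymptotic expansion (\ref{Asymptotic1}) into the inner product, interchange of the $m$ and $n$ sums, and application of Lemma \ref{TheoremBessel1} to collapse $\sum_n e^{ik\vt_n\cdot(\mx-\mc_m)}$ into $2\pi J_0(k|\mx-\mc_m|)$. The paper compresses everything after that into a one-sentence appeal to H\"older's inequality together with $\|e^{-ik\vt_n\cdot\mx}\|_{L^2(\mathbb{S}^1)}=1$, $|e^{ik\md\cdot\mc_m}|=1$, and $J_0\le 1$, so your explicit treatment of the denominator (the $m,m'$ double sum with the well-separatedness hypothesis) and of the normalizing constant is actually more detailed than the paper's own proof, and the phase-interference caveat you flag at the end is left equally unaddressed there.
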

\begin{proof}
Applying (\ref{Asymptotic1}) to (\ref{ImagingFunction}), we can evaluate
\begin{align*}
  \langle \psi_\infty(\vt_n,\md),e^{-ik\vt\cdot\mx}\rangle&\approx\sum_{n=1}^{N}\sum_{m=1}^{M}\frac{2\pi}{\ln(\ell_m/2)}e^{ik\md\cdot\mc_m}e^{-ik\vt_n\cdot\mc_m}\overline{e^{-ik\vt_n\cdot\mx}}\\
  &=\sum_{m=1}^{M}\frac{2\pi}{\ln(\ell_m/2)}e^{ik\md\cdot\mc_m}\left(\sum_{n=1}^{N}e^{ik\vt_n\cdot(\mx-\mc_m)}\right).
\end{align*}
With this, by applying Lemma \ref{TheoremBessel1}, we can obtain
\[\langle \psi_\infty(\vt_n,\md),e^{-ik\vt_n\cdot\mx}\rangle\approx\sum_{m=1}^{M}\frac{(2\pi)^2}{\ln(\ell_m/2)}e^{ik\md\cdot\mc_m}J_0(k|\mx-\mc_m|).\]
Since, $||e^{-ik\vt_n\cdot\mx}||_{L^2(\mathbb{S}^1)}=1$, $|e^{ik\md\cdot\mc_m}|=1$, and $J_0$ has maximum value $1$, applying H{\"o}lder's inequality, we arrive (\ref{Structure1}). This completes the proof.
\end{proof}

\begin{rem}\label{Observation1}
Based on the identified structure (\ref{Structure1}), we can observe that the location of $\mc_m$ can be detected by plotting $\mathcal{I}(\mx)$. However, if the length of $\Sigma_m$ is significantly shorter than the others, its location will not be detectable. Hence, we can conclude that the imaging performance of direct sampling method is highly depending on the length of crack.
\end{rem}

\subsection{Numerical simulations: part 1}\label{sec3-2}
In this section, results of numerical simulations are presented to support identified structure of (\ref{Structure1}). For this, three linear cracks $\Sigma_m$ with lengths $2\ell_m$ were used throughout the numerical simulations:
\begin{align*}
  \Sigma_1&=\set{[s+0.6,0.2]^T:-\ell_1\leq s\leq\ell_1}\\
  \Sigma_2&=\set{\mathcal{R}_{\pi/4}[s-0.4,s-0.35]^T:-\ell_2\leq s\leq\ell_2}\\
  \Sigma_3&=\set{\mathcal{R}_{7\pi/6}[s-0.25,s+0.6]^T:-\ell_3\leq s\leq\ell_3}.
\end{align*}
The wavelength $\lambda$ was set to $0.5$ and $N=30$ elements of $\Psi$ were collected, where all elements of $\Psi$ were generated by solving the Fredholm integral equation of the second kind along the cracks $\Sigma_m$ introduced in \cite[Chapter 4]{N}.

\begin{ex}[Imaging of cracks with the same length]
Figure \ref{DSM-Traditional} shows map $\mathcal{I}(\mx)$ for $\md=[0,1]^T$ when the lengths of all $\Sigma_m$ are the same, say, $\ell_m\equiv0.05$. As for the traditional results \cite{LZ}, although there exists some artifacts but the exact locations $\mc_m$ can be identified clearly.
\end{ex}

\begin{figure}[h]
  \begin{center}
    \includegraphics[width=0.495\textwidth]{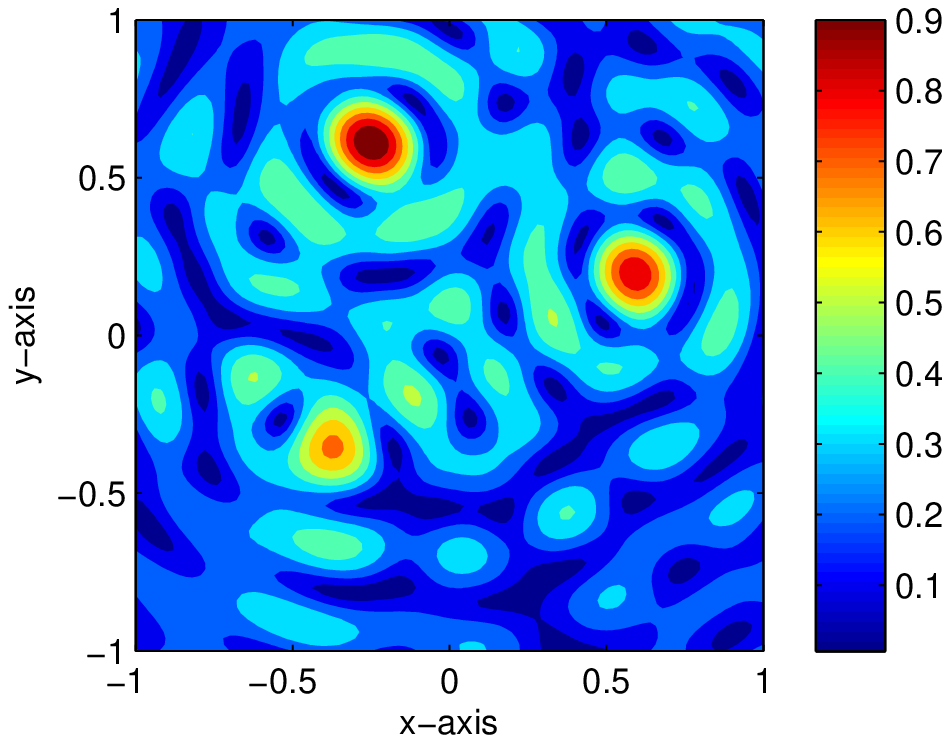}
    \includegraphics[width=0.495\textwidth]{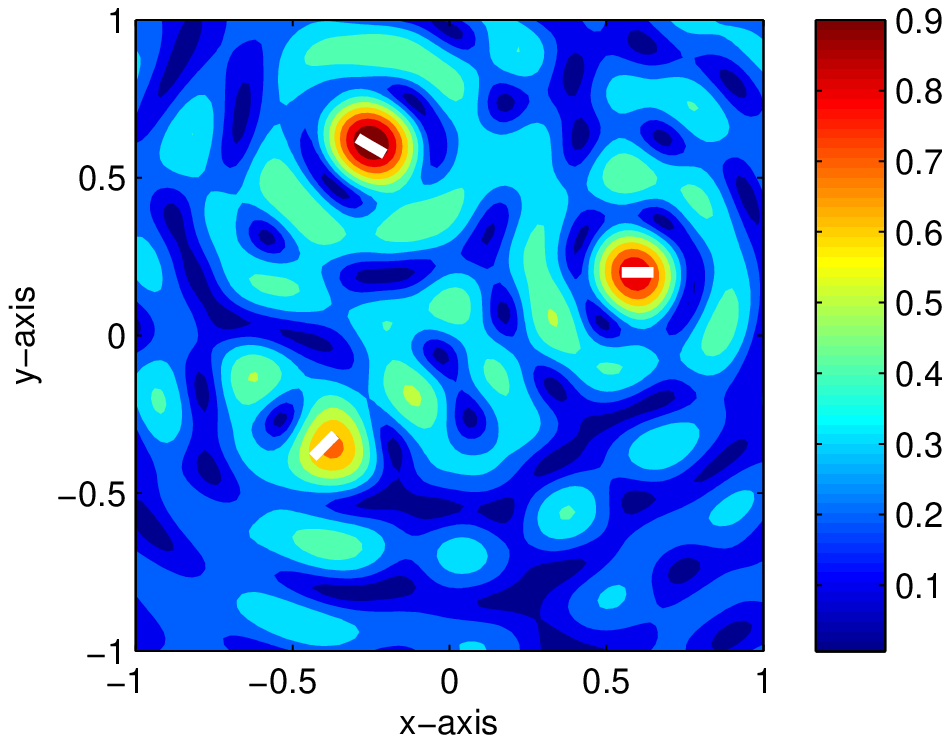}
    \caption{\label{DSM-Traditional}Map of $\mathcal{I}(\mx)$ for $\lambda=0.5$ when $\ell_m\equiv0.05$, $m=1,2,3$.}
  \end{center}
\end{figure}

\begin{ex}[Imaging of cracks with different lengths]
In this example, we consider the imaging of cracks when all the lengths are different. Figure \ref{DSM-Length1} shows map of $\mathcal{I}(\mx)$ for $\md=[0,1]^T$ when $\ell_1=0.05$, $\ell_2=0.04$, and $\ell_3=0.03$. Based on this result, we can observe that the value of $\mathcal{I}(\mc_3)$ is smaller than the values $\mathcal{I}(\mc_1)$ and $\mathcal{I}(\mc_2)$ because the length of $\Sigma_3$ is shorter than the others. Although some artifacts are exists in the map, we can identify locations of all cracks.
\end{ex}

\begin{figure}[h]
  \begin{center}
    \includegraphics[width=0.495\textwidth]{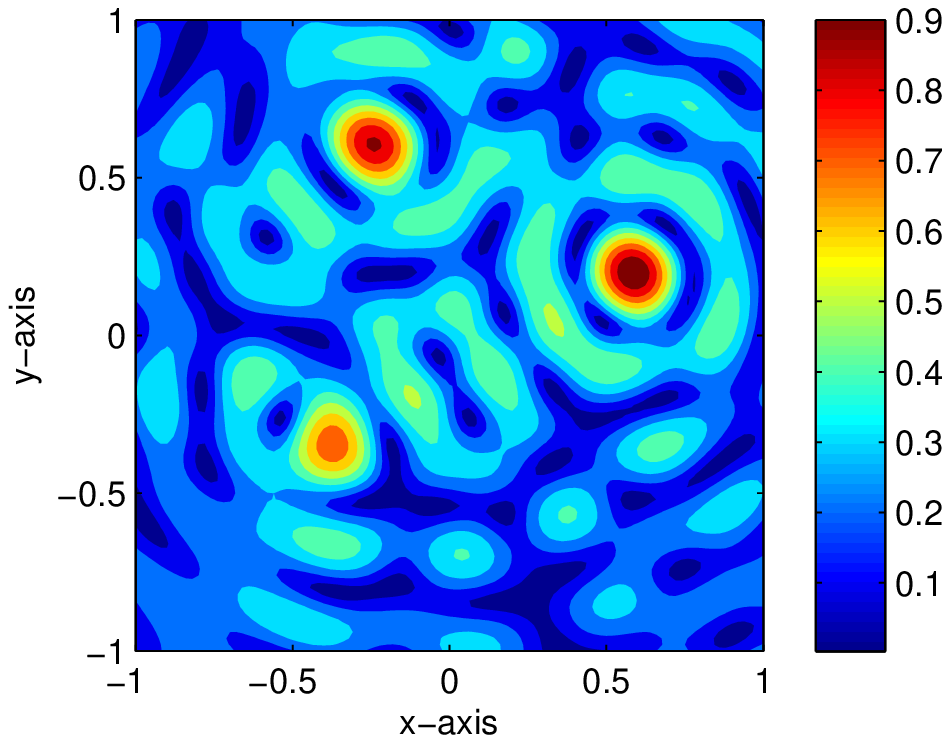}
    \includegraphics[width=0.495\textwidth]{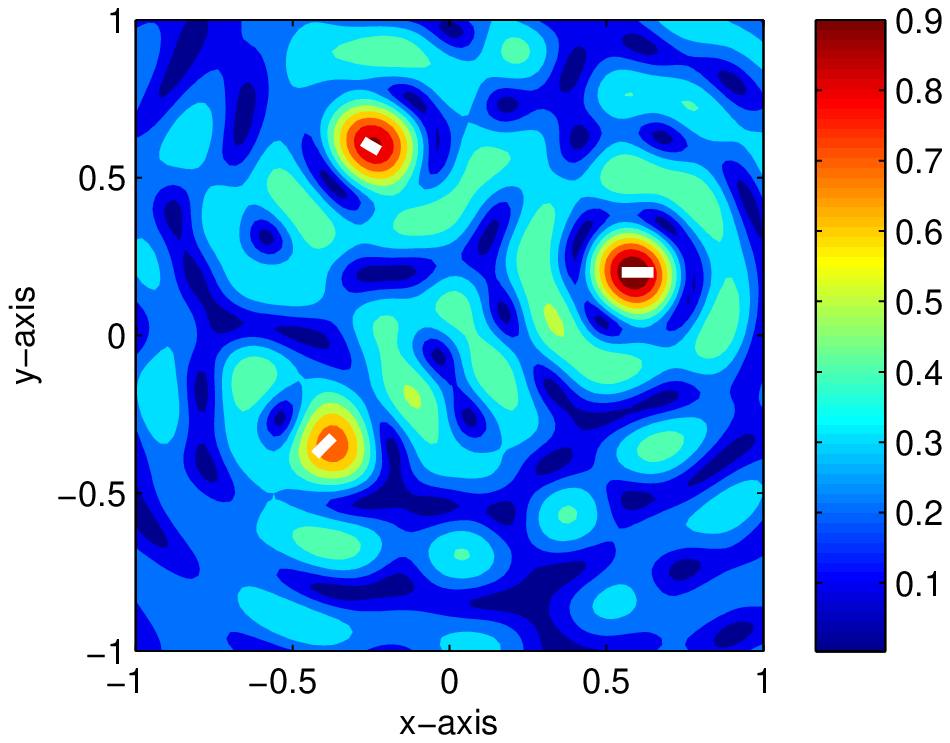}
    \caption{\label{DSM-Length1}Map of $\mathcal{I}(\mx)$ for $\lambda=0.5$ when $\ell_1=0.05$, $\ell_2=0.04$, and $\ell_3=0.03$.}
  \end{center}
\end{figure}

\begin{ex}[Imaging of cracks with extremely different lengths]
In this example, we consider the imaging of cracks when one crack is significantly longer than the others. Figure \ref{DSM-Length2} shows map of $\mathcal{I}(\mx)$ for $\md=[0,1]^T$ when $\ell_1=0.05$ and $\ell_2=\ell_3=0.01$. As discussed, only $\Sigma_1$ can be identified clearly via the map of $\mathcal{I}(\mx)$ because the lengths of the remaining cracks (here, $\Sigma_2$ and $\Sigma_3$) are significantly shorter than that of $\Sigma_1$. Unfortunately, it is very to identify the location of $\Sigma_2$ and $\Sigma_3$ due to the appearance of artifacts.
\end{ex}

\begin{figure}[h]
  \begin{center}
    \includegraphics[width=0.495\textwidth]{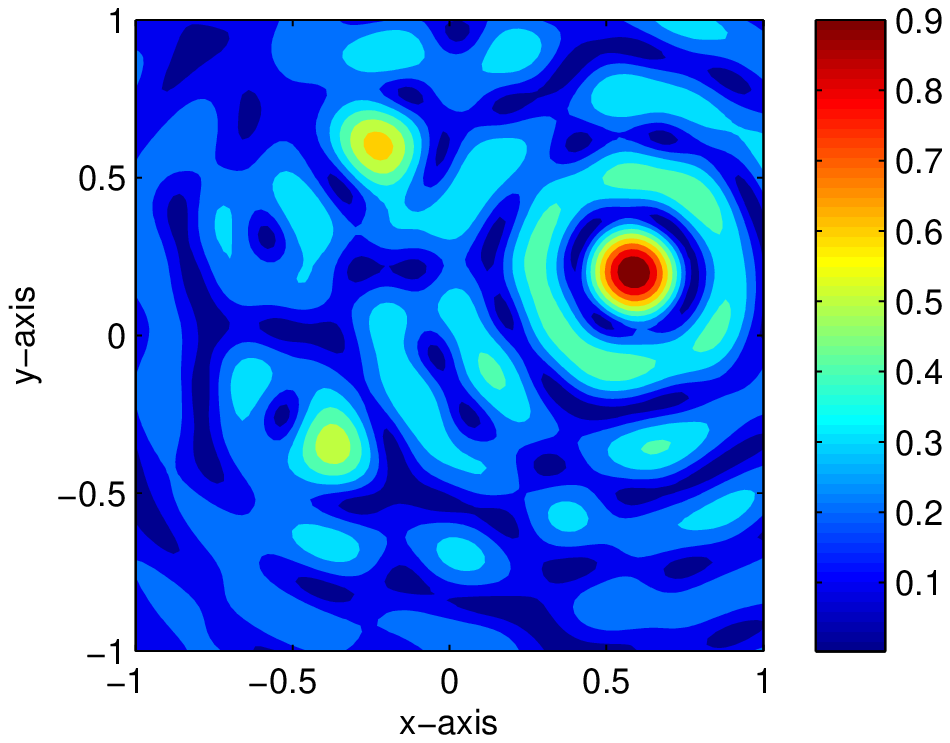}
    \includegraphics[width=0.495\textwidth]{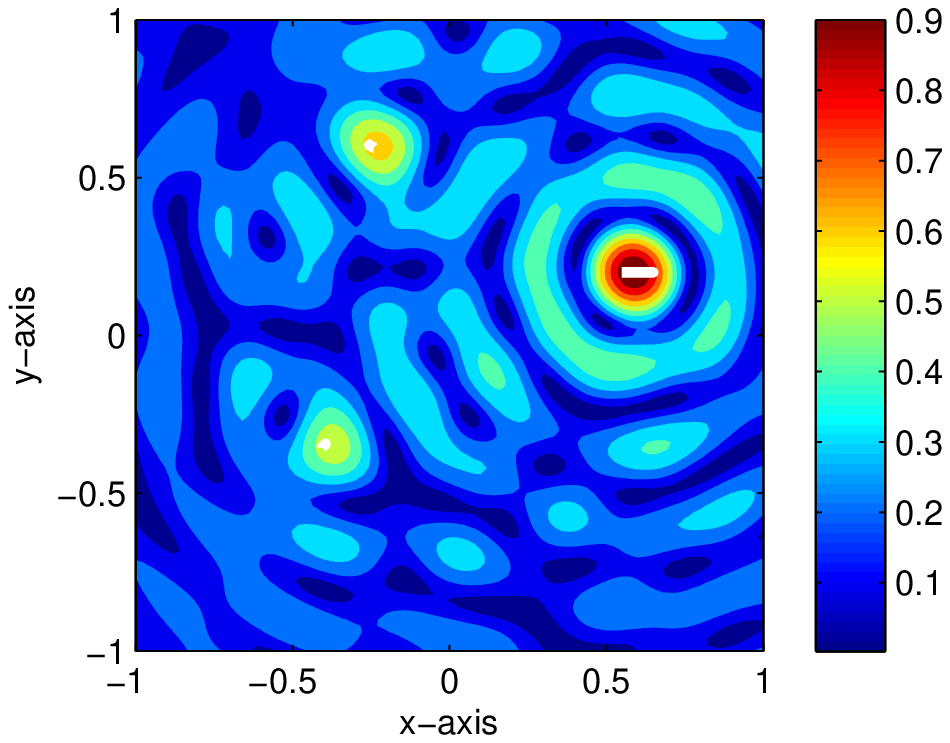}
    \caption{\label{DSM-Length2}Map of $\mathcal{I}(\mx)$ for $\lambda=0.5$ when $\ell_1=0.05$, $\ell_2=\ell_3=0.01$.}
  \end{center}
\end{figure}

\begin{ex}[Influence of incident direction]
In this example, we consider the influence of incident direction. Figure \ref{DSM-Incident} shows maps of $\mathcal{I}(\mx)$ for $\md=[\cos(3\pi/4),\sin(3\pi/4)]^T$ and $\md=[\cos(\pi/6),\sin(\pi/6)]^T$. It is interesting to observe that although three peaks of large magnitudes are appear in the map of $\mathcal{I}(\mx)$, identified locations are inaccurate. Furthermore, identified locations are shifted when applied incident direction $\md$ is varied. Unfortunately, we cannot explain this phenomenon via the structure (\ref{Structure1}). Hence, further analysis of indicator function is needed to explain this result.
\end{ex}

\begin{figure}[h]
  \begin{center}
    \includegraphics[width=0.495\textwidth]{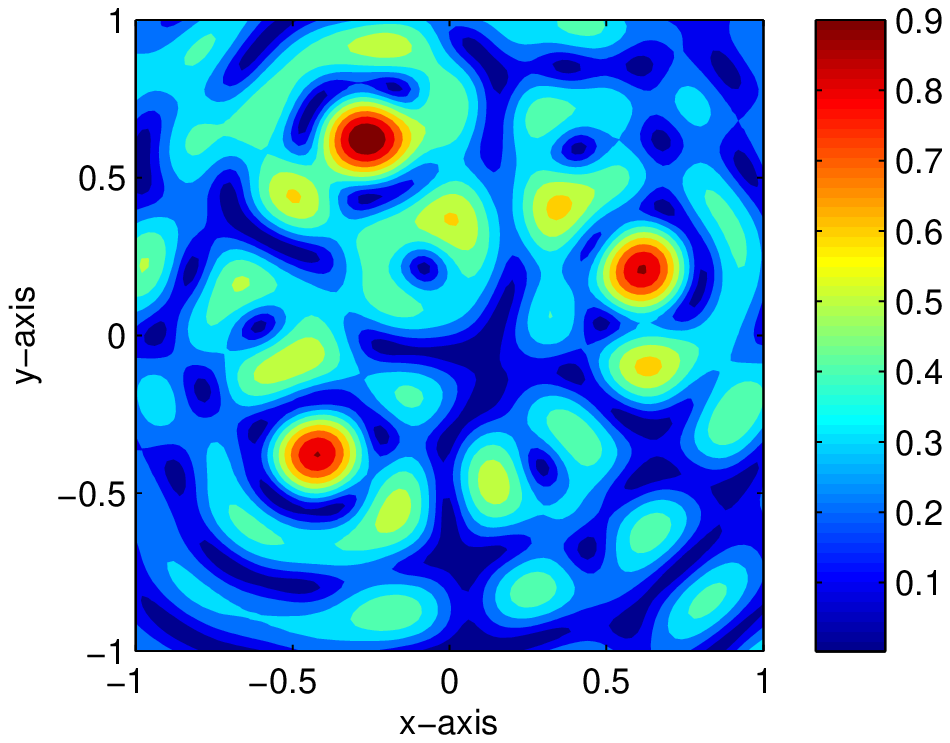}
    \includegraphics[width=0.495\textwidth]{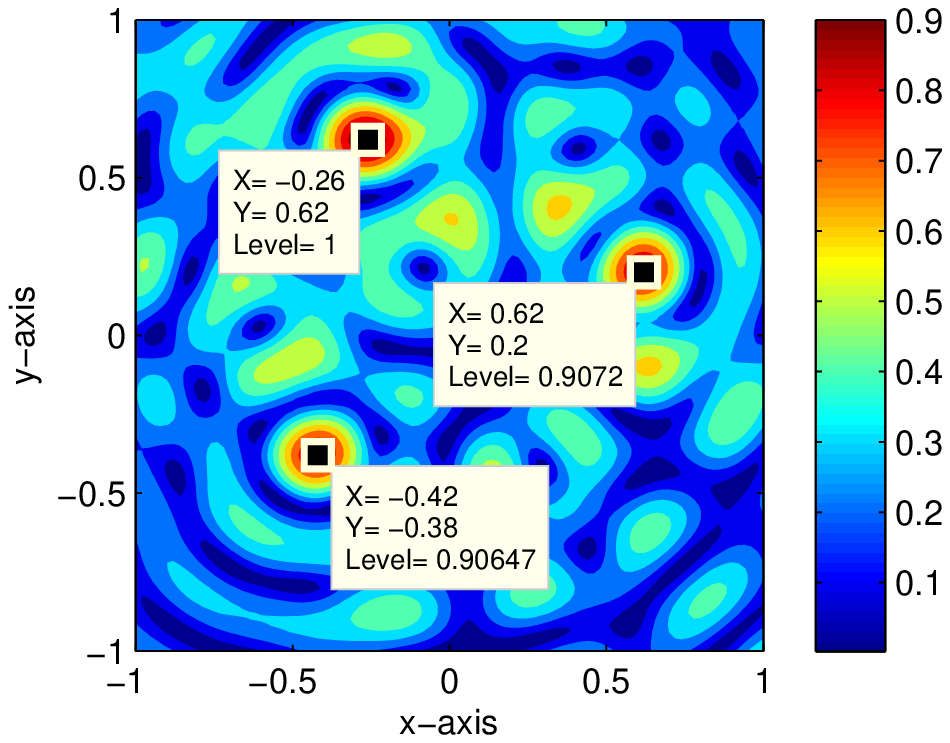}\\
    \includegraphics[width=0.495\textwidth]{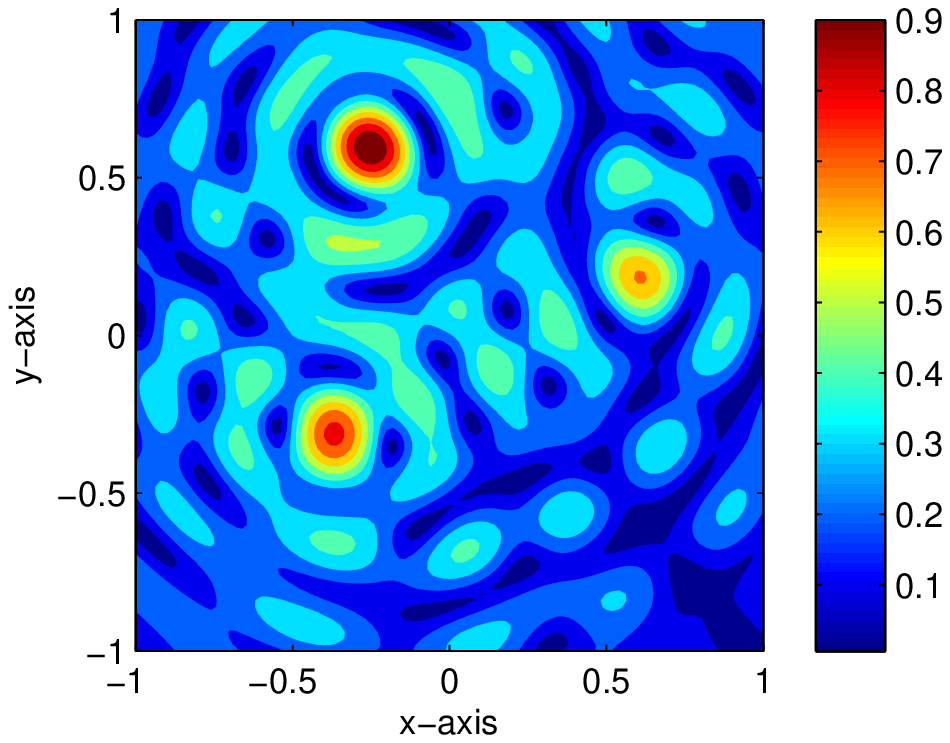}
    \includegraphics[width=0.495\textwidth]{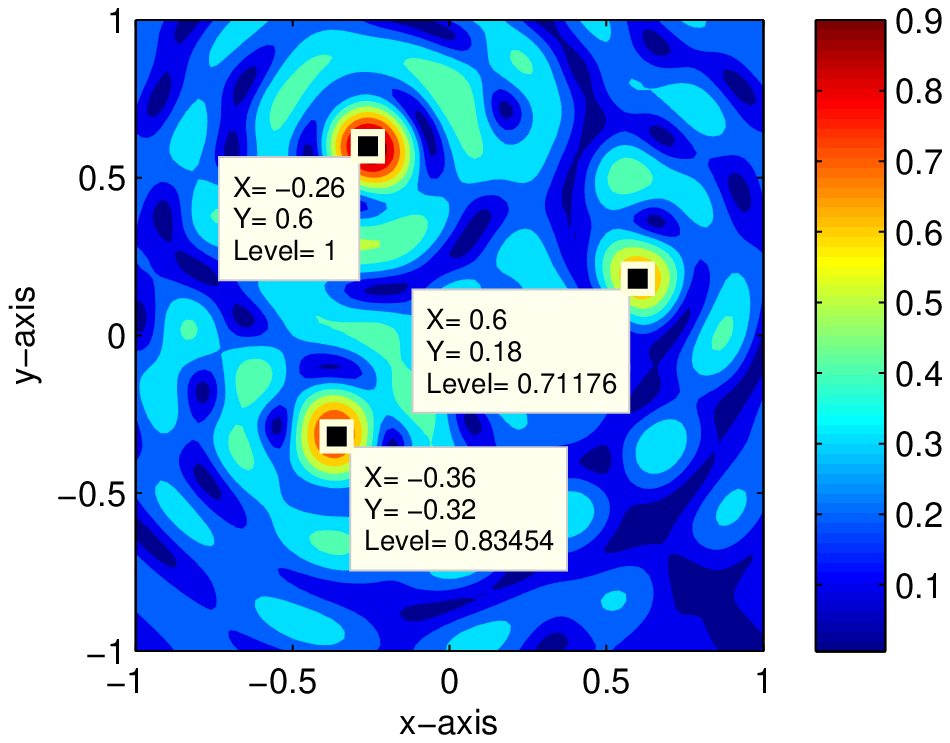}
    \caption{\label{DSM-Incident}Map of $\mathcal{I}(\mx)$ for $\md=[\cos(3\pi/4),\sin(3\pi/4)]^T$ (top) and $\md=[\cos(\pi/6),\sin(\pi/6)]^T$ (bottom).}
  \end{center}
\end{figure}

\subsection{Analysis of indicator function: dependency of the rotation of cracks}\label{sec3-3}
In order to explain the results in Figure \ref{DSM-Incident}, we explore the structure of indicator function by establishing a relationship with Bessel function of order zero and one, incident direction, and rotation of cracks. For this, we adopt second-order asymptotic expansion formula due to the presence of $\Sigma_m$, refer to \cite{AKLP}. In this section, we assume that $\ell_m\equiv\ell$ for $m=1,2,\cdots,M$.

\begin{lem}[Asymptotic expansion formula: higher order]\label{LemmaAsymptotic2} If $\psi(\mx,\md)$ satisfies (\ref{HelmholtzEquation}) and (\ref{BoundaryCondition}), and $\psi_{\mathrm{inc}}(\mx,\md)=e^{ik\md\cdot\mx}$, then following asymptotic expansion formula holds for $0<\ell<2$ and $\ell\ll\lambda/2$:
  \begin{equation}\label{Asymptotic2}
    \psi_\infty(\vt,\md)=\sum_{m=1}^{M}\bigg(\frac{2\pi}{\ln(\ell/2)}e^{ik\md\cdot\mc_m}e^{-ik\vt\cdot\mc_m}-\pi\ell^2\frac{\p e^{ik\md\cdot\mc_m}}{\p\mt(\mc_m)}\frac{\p e^{-ik\vt\cdot\mc_m}}{\p\mt(\mc_m)}\bigg)+\mathcal{O}(\ell^3),
  \end{equation}
  where $\p/\p\mt(\mc_m)$ denotes the tangential derivative at $\mc_m\in\Sigma_m$.
\end{lem}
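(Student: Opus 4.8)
The plan is to derive (\ref{Asymptotic2}) directly from the single-layer representation (\ref{FarFieldPattern}) by expanding the unknown density $\varphi(\cdot,\md)$ one order beyond its leading behaviour. First I would recall that $\varphi$ is determined by imposing the Dirichlet condition (\ref{BoundaryCondition}) on the single-layer potential whose far field is (\ref{FarFieldPattern}); this gives a boundary integral equation on $\Sigma$ with kernel $\frac{i}{4}H_0^{(1)}(k|\mx-\mz|)$. Since the cracks obey the separation hypothesis $k|\mc_m-\mc_{m'}|\gg\frac34$, the off-diagonal blocks of this equation contribute only at higher order, so it suffices to solve the single-crack problem and sum the resulting contributions over $m$.

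For a fixed crack I would write its points as $\mc_m+s\,\mt$ with $\mt=\mathcal{R}_\phi[1,0]^{\mathrm{T}}$ the unit tangent and $s\in[-\ell,\ell]$, rescale $s=\ell u$, and insert the small-argument expansion $\frac{i}{4}H_0^{(1)}(k\ell|u-u'|)=\frac{i}{4}-\frac{1}{2\pi}(\ln\frac{k\ell}{2}+\gamma+\ln|u-u'|)+\mathcal{O}(\ell^2\ln\ell)$. This splits the operator into a dominant logarithmic part $-\frac{1}{2\pi}\int_{-1}^1\ln|u-u'|(\cdot)\,du'$ plus a rank-one part proportional to the total charge $\int\varphi$. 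On the right-hand side I would Taylor-expand the incident field along the crack as $-e^{ik\md\cdot(\mc_m+s\mt)}=-e^{ik\md\cdot\mc_m}(1+ik\ell u\,(\md\cdot\mt)+\mathcal{O}(\ell^2))$.

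The core of the argument is the inversion of the logarithmic operator on $[-1,1]$, for which I would expand $\varphi$ in the weighted Chebyshev basis $T_n(u)/\sqrt{1-u^2}$ and use the spectral identity $\int_{-1}^1\ln|u-u'|\,T_n(u')/\sqrt{1-u'^2}\,du'$, equal to $-\pi\ln2$ for $n=0$ and to $-\frac{\pi}{n}T_n(u)$ for $n\ge1$. Matching the constant part of the right-hand side fixes the $n=0$ coefficient and produces a zeroth moment $\int_{-\ell}^{\ell}\varphi\,ds$ of size $\mathcal{O}(1/\ln(\ell/2))$ carrying the factor $e^{ik\md\cdot\mc_m}$; matching the $\mathcal{O}(u)$ part fixes the $n=1$ coefficient and produces a first moment $\int_{-\ell}^{\ell}s\,\varphi\,ds$ proportional to $k\ell^2(\md\cdot\mt)e^{ik\md\cdot\mc_m}$. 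Finally I would substitute these two moments into (\ref{FarFieldPattern}) after expanding the observation kernel as $e^{-ik\vt\cdot(\mc_m+s\mt)}=e^{-ik\vt\cdot\mc_m}(1-ik\ell u\,(\vt\cdot\mt)+\mathcal{O}(\ell^2))$. The zeroth moment paired with the constant kernel reproduces, after multiplication by the prefactor in (\ref{FarFieldPattern}), the first term of (\ref{Asymptotic2}); the first moment paired with the linear kernel yields the second term, in which the factor $(\md\cdot\mt)$ coming from the incident field and the factor $(\vt\cdot\mt)$ coming from the observation direction, together with $e^{ik\md\cdot\mc_m}$ and $e^{-ik\vt\cdot\mc_m}$, recombine precisely into the product of tangential derivatives $\frac{\p e^{ik\md\cdot\mc_m}}{\p\mt(\mc_m)}\frac{\p e^{-ik\vt\cdot\mc_m}}{\p\mt(\mc_m)}$.

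I expect the main obstacle to be the error bookkeeping rather than any single algebraic step. One must check that every neglected contribution---the $\mathcal{O}(\ell^2\ln\ell)$ remainder of the Hankel expansion, the Chebyshev modes $n\ge2$, the quadratic Taylor remainders of the two exponentials, and the inter-crack coupling---is genuinely $\mathcal{O}(\ell^3)$ uniformly in $\vt$ and $\md$, and that the rescaled logarithmic operator is boundedly invertible once $|\ln(\ell/2)|$ is large (the rank-one correction and the replacement of $\ln(k\ell/2)$ by $\ln(\ell/2)$ must be absorbed into the stated order). Supplying these uniform remainder estimates is exactly the technical content established in \cite{AKLP}, on which I would rely to close the argument.
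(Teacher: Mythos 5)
The paper contains no proof of Lemma \ref{LemmaAsymptotic2} to compare against: the formula is imported verbatim from \cite{AKLP}, so the only question is whether your sketch reconstructs the derivation behind the cited result, and it does. Imposing the Dirichlet condition on the single-layer potential underlying (\ref{FarFieldPattern}), expanding $\frac{i}{4}H_0^{(1)}$ for small argument, inverting the logarithmic operator on $[-1,1]$ through the weighted Chebyshev identity (your stated spectral values $-\pi\ln 2$ for $n=0$ and $-\frac{\pi}{n}T_n$ for $n\geq1$ are correct), and pairing the zeroth and first moments of the density with the constant and linear terms of $e^{-ik\vt\cdot(\mc_m+s\mt)}$ is exactly the standard route; your observation that the even/odd structure of the density prevents any spurious cross-term of order $\ell/\ln\ell$, and that $(\md\cdot\mt(\mc_m))(\vt\cdot\mt(\mc_m))$ recombines with the phase factors into the product of tangential derivatives, is also right. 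One caveat deserves emphasis, since you partly gloss it: the claim that the rank-one correction and the replacement of $\ln(k\ell/2)$ by $\ln(\ell/2)$ can be ``absorbed into the stated order'' cannot hold literally, because resumming the monopole equation produces corrections to the first term of size $1/|\ln(\ell/2)|^2$ (this is precisely the remainder recorded in Lemma \ref{LemmaAsymptotic1}), which dominate $\ell^3$ as $\ell\to0$. So the $\mathcal{O}(\ell^3)$ in (\ref{Asymptotic2}) must be read as ``each displayed term kept to its leading order in the two small parameters $\ell$ and $1/|\ln(\ell/2)|$''; this imprecision belongs to the statement as quoted, not to your argument, and your decision to offload the uniform remainder estimates to \cite{AKLP} is where the real technical content sits. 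With that reading, your proposal is correct and coincides with the source's derivation.
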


And we introduce a useful relation derived in \cite{P-SUB3}.
\begin{lem}\label{TheoremBessel2}
  For a sufficiently large $N$, $\vt_n,\vt,\vv\in\mathbb{S}^1$, and $\mx\in\mathbb{R}^2$, the following relation holds:
  \[\sum_{n=1}^{N}(\vv\cdot\vt_n)e^{ik\vt_n\cdot\mx}=\int_{\mathbb{S}^1}(\vv\cdot\vt)e^{ik\vt\cdot\mx}d\vt=2\pi i\left(\frac{\mx}{|\mx|}\cdot\vv\right)J_1(k|\mx|).\]
\end{lem}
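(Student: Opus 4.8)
The statement has two parts: the passage from the discrete sum over the equidistributed directions $\vt_n$ to the integral over $\mathbb{S}^1$, and the evaluation of that integral in closed form. For the first part the plan is to invoke exactly the quadrature argument that already underlies Lemma \ref{TheoremBessel1}: since the $\vt_n$ are equally spaced on $\mathbb{S}^1$ and the integrand $(\vv\cdot\vt)e^{ik\vt\cdot\mx}$ is smooth and $2\pi$-periodic in the angular variable, the Riemann (trapezoidal) sum converges to the integral as $N\to\infty$, which is the content borrowed from \cite{P-SUB3}. I would simply cite this and concentrate the real work on the integral identity, keeping the normalization convention identical to that of Lemma \ref{TheoremBessel1}.

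For the integral, the cleanest route is to differentiate the already-established zeroth-order identity rather than recompute from scratch. Lemma \ref{TheoremBessel1} gives $\int_{\mathbb{S}^1}e^{ik\vt\cdot\mx}\,d\vt=2\pi J_0(k|\mx|)$, and I would take the gradient $\nabla_\mx$ of both sides. On the left, differentiating under the integral sign (justified since the integrand is smooth in $\mx$ and $\mathbb{S}^1$ is compact) produces $ik\int_{\mathbb{S}^1}\vt\,e^{ik\vt\cdot\mx}\,d\vt$. On the right, the chain rule together with the Bessel identity $J_0'=-J_1$ gives $-2\pi k\,J_1(k|\mx|)\,\mx/|\mx|$. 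Equating the two and dividing by $ik$ yields the vector identity $\int_{\mathbb{S}^1}\vt\,e^{ik\vt\cdot\mx}\,d\vt=2\pi i\,J_1(k|\mx|)\,\mx/|\mx|$, and taking the dot product with $\vv$ delivers the claim.

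As an independent check (or an alternative proof), one can compute directly: writing $\vt=(\cos\theta,\sin\theta)$, $\vv=(\cos\alpha,\sin\alpha)$, and $\mx=|\mx|(\cos\beta,\sin\beta)$, and shifting $\phi=\theta-\beta$, the factor $\vv\cdot\vt=\cos(\theta-\alpha)$ expands via the angle-addition formula. The $\sin\phi$ contribution integrates to zero by odd symmetry, and the surviving $\cos\phi$ term is handled by the standard integral representation $\int_0^{2\pi}\cos\phi\,e^{iz\cos\phi}\,d\phi=2\pi i\,J_1(z)$ with $z=k|\mx|$. Re-identifying $\cos(\beta-\alpha)=(\mx/|\mx|)\cdot\vv$ then recovers the same formula, confirming the differentiation route.

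I expect no deep obstacle here: the differentiation-under-the-integral step and the chain rule are routine, and the genuinely delicate passage is the discrete-to-continuous quadrature, which requires $N$ large and is outsourced to \cite{P-SUB3}. The only remaining point needing a word of care is the degenerate case $\mx=\mathbf{0}$, where $\mx/|\mx|$ is undefined but $J_1(k|\mx|)\to0$ forces the right-hand side to vanish continuously, in agreement with $\int_{\mathbb{S}^1}(\vv\cdot\vt)\,d\vt=0$ on the left.
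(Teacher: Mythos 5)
Your proposal is correct, but there is nothing in the paper to compare it against line by line: the paper offers no proof of this lemma at all, introducing it only as ``a useful relation derived in \cite{P-SUB3}.'' Your argument therefore fills a genuine gap rather than deviating from an existing one. Both of your derivations of the integral identity are sound. The gradient route --- applying $\nabla_\mx$ to the identity $\int_{\mathbb{S}^1}e^{ik\vt\cdot\mx}d\vt=2\pi J_0(k|\mx|)$ of Lemma \ref{TheoremBessel1}, using $J_0'=-J_1$, dividing by $ik$, and dotting with $\vv$ --- is the slicker of the two and has the virtue of reusing the paper's own zeroth-order lemma. Your direct computation is equally valid and in fact mirrors the mechanism the paper relies on elsewhere: after the shift $\phi=\theta-\beta$, the surviving integral $\int_0^{2\pi}\cos\phi\,e^{iz\cos\phi}d\phi=2\pi i J_1(z)$ is precisely the $s=1$ term isolated from the Jacobi--Anger expansion (\ref{Jacobi-Anger}) that the paper invokes in Theorem \ref{TheoremImprovement1}, so your check dovetails with the paper's toolkit.

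Two points deserve explicit care. First, as with Lemma \ref{TheoremBessel1}, the displayed equality of the discrete sum with the integral cannot hold literally as written: a trapezoidal sum over $N$ equispaced directions approximates the integral only after insertion of the quadrature weight $2\pi/N$. Your remark about keeping the normalization convention of Lemma \ref{TheoremBessel1} is the right instinct --- and since $\mathcal{I}(\mx)$ is normalized by its maximum, the constant is ultimately harmless --- but you should state the weight explicitly rather than leave it implicit, since the paper itself is silent on it. Second, your observation about the degenerate point $\mx=\mathbf{0}$, where $\mx/|\mx|$ is undefined but $J_1(k|\mx|)\to 0$ makes the right-hand side extend continuously to $0$ in agreement with $\int_{\mathbb{S}^1}(\vv\cdot\vt)\,d\vt=0$, is a detail neither the paper nor the cited reference addresses, and it matters because the lemma is later applied at search points $\mx$ near $\mc_m$; keep it.
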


By combining Lemmas \ref{LemmaAsymptotic2} and \ref{TheoremBessel2}, we can obtain the following structure of indicator function. The result is follows.

\begin{thm}[Structure of indicator function]\label{TheoremStructure2}
Assume that total number of observation direction $N$ is sufficiently large. Then, $\mathcal{I}(\mx)$ can be represented as
\begin{equation}\label{Structure2}
\mathcal{I}(\mx)=\frac{|\Phi_1(\mx)+\Phi_2(\mx,\md)|}{\displaystyle\max_{\mx\in\mathbb{R}^2}|\Phi_1(\mx)+\Phi_2(\mx,\md)|},
\end{equation}
where
\begin{align}
\Phi_1(\mx,\md)&=\frac{2}{\ln(\ell/2)}\sum_{m=1}^{M}e^{ik\md\cdot\mc_m}J_0(k|\mx-\mc_m|)\label{firstterm}\\
\Phi_2(\mx,\md)&=-k^2\ell^2 i\sum_{m=1}^{M}\bigg((\md\cdot\mt(\mc_m))e^{ik\md\cdot\mc_m}\bigg)\bigg(\frac{\mx-\mc_m}{|\mx-\mc_m|}\cdot\mt(\mc_m)\bigg)J_1(k|\mx-\mc_m|).\label{secondterm}
  \end{align}
\end{thm}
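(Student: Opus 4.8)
The plan is to mirror the structure of the proof of Theorem~\ref{TheoremStructure1}, but now carrying the second-order term from the higher-order asymptotic expansion~(\ref{Asymptotic2}) all the way through the inner product. First I would substitute~(\ref{Asymptotic2}) into the numerator $\langle\psi_\infty(\vt_n,\md),e^{-ik\vt_n\cdot\mx}\rangle$ of the indicator function~(\ref{ImagingFunction}), discarding the $\mathcal{O}(\ell^3)$ remainder. This splits the sum over $n$ into two pieces, one coming from the leading $2\pi/\ln(\ell/2)$ term and one from the $-\pi\ell^2$ tangential-derivative term. The first piece is handled exactly as in Theorem~\ref{TheoremStructure1}: applying Lemma~\ref{TheoremBessel1} to $\sum_{n}e^{ik\vt_n\cdot(\mx-\mc_m)}$ produces $2\pi J_0(k|\mx-\mc_m|)$, which after dividing by the normalization $\|e^{-ik\vt_n\cdot\mx}\|_{L^2(\mathbb{S}^1)}$ yields (up to an overall constant) the term $\Phi_1(\mx,\md)$ in~(\ref{firstterm}).

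The key new computation is the second piece. I would first evaluate the two tangential derivatives explicitly: since $\mc_m$ ranges over the crack $\Sigma_m$ with unit tangent $\mt(\mc_m)$, the chain rule gives $\p e^{ik\md\cdot\mc_m}/\p\mt(\mc_m)=ik(\md\cdot\mt(\mc_m))e^{ik\md\cdot\mc_m}$ and similarly $\p e^{-ik\vt\cdot\mc_m}/\p\mt(\mc_m)=-ik(\vt\cdot\mt(\mc_m))e^{-ik\vt\cdot\mc_m}$. Multiplying these and collecting the factor $-\pi\ell^2$ produces a constant $-\pi\ell^2 k^2$ times $(\md\cdot\mt(\mc_m))e^{ik\md\cdot\mc_m}$ times $(\vt_n\cdot\mt(\mc_m))e^{-ik\vt_n\cdot\mc_m}$. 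Then, pairing against $\overline{e^{-ik\vt_n\cdot\mx}}$ and summing over $n$, the crucial sum becomes $\sum_{n}(\mt(\mc_m)\cdot\vt_n)e^{ik\vt_n\cdot(\mx-\mc_m)}$, which is precisely the object evaluated by Lemma~\ref{TheoremBessel2} with $\vv=\mt(\mc_m)$ and $\mx$ replaced by $\mx-\mc_m$. This gives $2\pi i\big(\tfrac{\mx-\mc_m}{|\mx-\mc_m|}\cdot\mt(\mc_m)\big)J_1(k|\mx-\mc_m|)$, and assembling the constants $-\pi\ell^2 k^2\cdot 2\pi i$ reproduces the structure of $\Phi_2(\mx,\md)$ in~(\ref{secondterm}).

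Having computed both pieces, I would write the numerator as $(2\pi)^2$ times $\tfrac12\big(\Phi_1(\mx,\md)+\Phi_2(\mx,\md)\big)$ (absorbing factors so the constants match the stated definitions), observe that the common prefactor $(2\pi)^2$ cancels against the same prefactor appearing in the denominator normalization, and use $\|e^{-ik\vt_n\cdot\mx}\|_{L^2(\mathbb{S}^1)}=1$ as in the previous proof. Finally, dividing by $\|\psi_\infty(\vt_n,\md)\|_{L^2(\mathbb{S}^1)}$ amounts to dividing by the maximum over $\mx$ of $|\Phi_1+\Phi_2|$, which yields the normalized form~(\ref{Structure2}).

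The main obstacle I anticipate is bookkeeping of the constants and the precise factor by which the two terms are weighted, since the statement writes $\Phi_1$ with coefficient $2/\ln(\ell/2)$ rather than the $2\pi/\ln(\ell/2)$ that appears directly in~(\ref{Asymptotic2}); reconciling this requires tracking exactly how the $2\pi$ factors from Lemmas~\ref{TheoremBessel1} and~\ref{TheoremBessel2} interact with the normalization by $\|\psi_\infty\|_{L^2(\mathbb{S}^1)}$. A secondary subtlety is the legitimacy of discarding the $\mathcal{O}(\ell^3)$ term inside the inner product: this is justified because $N$ is large and the sum over $n$ of each bounded summand contributes at most $\mathcal{O}(N\ell^3)$, which is dominated by the retained $\mathcal{O}(\ell^2)$ contribution under the small-crack hypothesis $k\ell\to0+$, so the approximation is asymptotically consistent in the same sense as~(\ref{Structure1}).
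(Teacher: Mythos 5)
Your proposal follows essentially the same route as the paper's own proof: substitute the higher-order expansion (\ref{Asymptotic2}) into the inner product, evaluate the leading term via Lemma \ref{TheoremBessel1} as in Theorem \ref{TheoremStructure1}, evaluate the tangential-derivative term via Lemma \ref{TheoremBessel2} with $\vv=\mt(\mc_m)$, factor out the common $2\pi^2$ so the coefficients $2/\ln(\ell/2)$ and $-k^2\ell^2 i$ emerge, and normalize. Your sign bookkeeping for the two tangential derivatives ($ik(\md\cdot\mt)$ and $-ik(\vt_n\cdot\mt)$) is in fact cleaner than the paper's intermediate display, which contains a sign slip that silently cancels before the final formula, and your explicit justification for discarding the $\mathcal{O}(\ell^3)$ remainder is a point the paper leaves implicit.
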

\begin{proof}
Based on Lemma \ref{TheoremStructure2} and an elementary calculus, we can evaluate
\begin{align*}
\sum_{n=1}^{N}\frac{\p e^{ik\md\cdot\mc_m}}{\p\mt(\mc_m)}\frac{\p e^{-ik\vt_n\cdot\mc_m}}{\p\mt(\mc_m)}\overline{e^{-ik\vt_n\cdot\mx}}&=\sum_{n=1}^{N}\bigg((ik\md\cdot\mt(\mc_m))e^{ik\md\cdot\mc_m}\bigg)\bigg((ik\vt_n\cdot\mt(\mc_m))e^{ik\vt_n\cdot(\mx-\mc_m)}\bigg)\\
&=-k^2\bigg((\md\cdot\mt(\mc_m))e^{ik\md\cdot\mc_m}\bigg)\bigg(\sum_{n=1}^{N}(\vt_n\cdot\mt(\mc_m))e^{ik\vt_n\cdot(\mx-\mc_m)}\bigg)\\
&=-2\pi ik^2\bigg((\md\cdot\mt(\mc_m))e^{ik\md\cdot\mc_m}\bigg)\bigg(\frac{\mx-\mc_m}{|\mx-\mc_m|}\cdot\mt(\mc_m)\bigg)J_1(k|\mx-\mc_m|)
\end{align*}
With this, applying (\ref{Asymptotic2}) to (\ref{ImagingFunction}) and Theorem \ref{TheoremStructure1}, we can obtain
\begin{align*}
  \langle\psi_\infty(\vt_n,\md),e^{-ik\vt\cdot\mx}\rangle\approx&\sum_{n=1}^{N}\sum_{m=1}^{M}\left(\frac{2\pi}{\ln(\ell/2)}e^{ik\md\cdot\mc_m}e^{-ik\vt_n\cdot\mc_m}\overline{e^{-ik\vt_n\cdot\mx}}-\pi\ell^2\frac{\p e^{ik\md\cdot\mc_m}}{\p\mt(\mc_m)}\frac{\p e^{-ik\vt_n\cdot\mc_m}}{\p\mt(\mc_m)}\overline{e^{-ik\vt_n\cdot\mx}}\right)\\
  =&\frac{(2\pi)^2}{\ln(\ell/2)}\sum_{m=1}^{M}e^{ik\md\cdot\mc_m}J_0(k|\mx-\mc_m|)\\
  &-2\pi^2 k^2 \ell^2 i\sum_{m=1}^{M}\bigg((\md\cdot\mt(\mc_m))e^{ik\md\cdot\mc_m}\bigg)\bigg(\frac{\mx-\mc_m}{|\mx-\mc_m|}\cdot\mt(\mc_m)\bigg)J_1(k|\mx-\mc_m|).
\end{align*}
Hence, we can obtain (\ref{Structure2}) by applying H{\"o}lder's inequality. This completes the proof.
\end{proof}

\begin{rem}\label{Observation2}
Based on the existing results, $\mathcal{I}(\mx)$ is expected to exhibit peaks of magnitude of $1$ at the location $\mx=\mc_m\in\Sigma_m$ and of small magnitudes at $\mx\notin\Sigma_m$. However, based on the identified structure (\ref{Structure2}), value of $\mathcal{I}(\mx)$ is close to $1$ when $|\Phi(\mx,\mc_m,\md)|$ reaches its maximum value. This means that identified location is highly depending on the direction of propagation $\md$ and unit tangential direction $\mt(\mc_m)$, i.e., rotation of $\Sigma_m$. For example, if $\md=-\mt(\mc_m)$ and $\mx-\mc_m$ is parallel to $\mt(\mc_m)$ then, $|\Phi(\mx,\mc_m,\md)|$ does not have its maximum value at $\mx=\mc_m$. This is the reason why inaccurate location of $\Sigma_m$ is identified via the direct sampling method.
\end{rem}

Now, let us consider the effect of $\Phi_2(\mx,\md)$ in (\ref{secondterm}).
\begin{cor}\label{CorollaryError}Assume that $M=1$. Then, if $\mx$ is close to $\mc$ such that $0<k|\mx-\mc|\ll\sqrt{2}$ then
\begin{equation}\label{error1}
|\Phi_2(\mx,\md)|\ll|\Phi_1(\mx,\md)|
\end{equation}
and if $\mx$ is far away from $\mc$ such that $k|\mx-\mc|\gg0.75$ then
\begin{equation}\label{error2}
|\Phi_1(\mx,\md)|,|\Phi_2(\mx,\md)|\longrightarrow0+
\end{equation}
for all $\mx\in\mathbb{R}^2$.
\end{cor}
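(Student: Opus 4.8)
The plan is to reduce to the single crack and then to separate the small- and large-argument regimes of $J_0$ and $J_1$. First I would fix $M=1$, write $\mc=\mc_1$, $\mt=\mt(\mc_1)$ and set $t:=k|\mx-\mc|$. Taking moduli in (\ref{firstterm})--(\ref{secondterm}) and using $|e^{ik\md\cdot\mc}|=1$ together with the fact that $\md$, $\mt$ and $(\mx-\mc)/|\mx-\mc|$ are unit vectors (so each dot product with $\mt$ has modulus at most $1$), I obtain the two clean estimates
\[
|\Phi_1(\mx,\md)|=\frac{2}{|\ln(\ell/2)|}\,|J_0(t)|,\qquad |\Phi_2(\mx,\md)|\le k^2\ell^2\,|J_1(t)|,
\]
where $\ell<2$ has been used so that $\ln(\ell/2)<0$ and its modulus equals $|\ln(\ell/2)|$. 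The whole corollary is then a comparison of these two quantities in the two ranges of $t$.

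For (\ref{error1}) I would invoke the elementary Bessel bounds $J_0(t)\ge1-t^2/4$ and $0\le J_1(t)\le t/2$, both of which follow from the alternating power series of $J_0$ and $J_1$ on $0\le t<2$. The number $\sqrt2$ enters precisely because $1-t^2/4>1/2$ exactly when $t<\sqrt2$; hence on $0<t\ll\sqrt2$ the factor $|J_0(t)|$ stays bounded below (close to $1$), while $|J_1(t)|$ is of order $t$. Forming the ratio,
\[
\frac{|\Phi_2(\mx,\md)|}{|\Phi_1(\mx,\md)|}\le\frac{k^2\ell^2|\ln(\ell/2)|}{2}\cdot\frac{|J_1(t)|}{|J_0(t)|}\le\sqrt2\,\frac{k^2\ell^2|\ln(\ell/2)|}{2},
\]
and the right-hand side tends to $0$ as $\ell\to0+$, since $k$ is fixed and $\ell^2|\ln(\ell/2)|\to0$. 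This yields $|\Phi_2|\ll|\Phi_1|$.

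For (\ref{error2}) I would instead use the large-argument asymptotics $J_0(t),J_1(t)=\mathcal O(t^{-1/2})$ as $t\to\infty$; the threshold $0.75$ is read off the standing separation scale $k|\mc_m-\mc_{m'}|\gg3/4$ and simply marks the regime past the central peak in which both Bessel factors already decay. Since the prefactors $2/|\ln(\ell/2)|$ and $k^2\ell^2$ are fixed, the $\mathcal O(t^{-1/2})$ decay of $J_0$ and $J_1$ forces both $|\Phi_1(\mx,\md)|$ and $|\Phi_2(\mx,\md)|$ to $0+$ as $t=k|\mx-\mc|$ grows.

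The step I expect to be the genuine obstacle is making the heuristic thresholds honest. For (\ref{error1}) the point is that $|\Phi_2|\ll|\Phi_1|$ is an asymptotic statement in the small parameter $\ell$ (not in $t$), so the argument must keep the amplitude $k^2\ell^2|\ln(\ell/2)|$ as the decisive vanishing factor and use the lower bound $J_0(t)>1/2$ only to prevent the denominator from collapsing; this is exactly why the clean range is $t<\sqrt2$ rather than up to the first zero of $J_0$ at $t\approx2.405$. For (\ref{error2}), the subtlety is that $t\gg0.75$ is not literally $t\to\infty$, so the conclusion should be read as: both amplitudes are uniformly small in this range and decay to $0$ as the search point recedes, which the $\mathcal O(t^{-1/2})$ estimates supply.
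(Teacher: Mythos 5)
Your proof is correct and follows the same two-regime Bessel analysis as the paper: small-argument behaviour of $J_0,J_1$ on $0<k|\mx-\mc|\ll\sqrt{2}$ for (\ref{error1}), and the $\mathcal{O}(t^{-1/2})$ large-argument decay for $k|\mx-\mc|\gg0.75$ in (\ref{error2}), with the size of $\Phi_2$ controlled throughout by the vanishing amplitude $(k\ell)^2$. Where you genuinely improve on the paper: its proof of (\ref{error1}) only bounds $|\Phi_1(\mx,\md)|$ \emph{from above} by $|2/\ln(\ell/2)|$ and shows $|\Phi_2(\mx,\md)|\to0$, which by itself cannot deliver $|\Phi_2|\ll|\Phi_1|$ --- a vanishing upper bound on the dominant term proves nothing about the ratio. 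Your ratio estimate, with the alternating-series lower bound $J_0(t)\ge 1-t^2/4>1/2$ for $t<\sqrt{2}$ keeping the denominator away from zero and the decisive factor $k^2\ell^2|\ln(\ell/2)|\to0$ isolated, closes exactly this gap, and it also \emph{explains} the threshold $\sqrt{2}$, which the paper uses but never motivates. One minor divergence in (\ref{error2}): you send $t=k|\mx-\mc|\to\infty$ with $\ell$ fixed, whereas the paper's displayed chains use $t\gg 3/4$ only to bound $t^{-1/2}$ by a constant and then take the limit in the small parameter (via $1/|\ln(\ell/2)|\to0$ and $(k\ell)^2\to0$); both readings are consistent with the loose ``$\gg$'' formulation of the statement, and your closing remark correctly flags this ambiguity, so no repair is needed.
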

\begin{proof}
Let $\Gamma(n)$ denotes the Gamma function and $\mx$ is close to $\mc$ such that $0<k|\mx-\mc|\ll\sqrt{2}$. Then, based on the asymptotic form of Bessel function
\[J_n(x)\approx\frac{1}{\Gamma(n+1)}\left(\frac{x}{2}\right)^n,\]
we can observe that
\[|\Phi_1(\mx,\md)|=\left|\frac{2}{\ln(\ell/2)}e^{ik\md\cdot\mc}J_0(k|\mx-\mc|)\right|\leq\left|\frac{2}{\ln(\ell/2)}\frac{1}{\Gamma(1)}\right|=\left|\frac{2}{\ln(\ell/2)}\right|.\]
Since $k\ell\rightarrow0+$,
\begin{align*}
|\Phi_2(\mx,\md)|&=\left|k^2\ell^2\bigg((\md\cdot\mt(\mc))e^{ik\md\cdot\mc}\bigg)\bigg(\frac{\mx-\mc}{|\mx-\mc|}\cdot\mt(\mc)\bigg)J_1(k|\mx-\mc|)\right|\\
&\leq (k\ell)^2|J_1(k|\mx-\mc|)|\approx\frac{\pi^2}{\Gamma(2)}\frac{k|\mx-\mc|}{2}\ll\frac{(k\ell)^2}{\sqrt{2}}\longrightarrow0.
\end{align*}
Hence, the value of $|\Phi_2(\mx,\md)|$ can be negligible and we can conclude (\ref{error1}) holds.

Now, we let $\mx$ is far away from $\mc$ such that $k|\mx-\mc|\gg0.75$. Then based on the asymptotic form of Bessel function
\[J_n(x)\approx\sqrt{\frac{2}{\pi x}}\left(\cos\left(x-\frac{n\pi}{2}-\frac{\pi}{4}\right)+\mathcal{O}\left(\frac{1}{|x|}\right)\right),\]
we can observe that
\[|\Phi_1(\mx,\md)|\leq\frac{2\sqrt{2}}{|\ln(\ell/2)|\sqrt{k|\mx-\mc|}}\ll\frac{8\sqrt{2}}{3|\ln(\ell/2)|}\longrightarrow0+\]
and
\[|\Phi_2(\mx,\md)|\frac{(k\ell)^2\sqrt{2}}{\sqrt{k|\mx-\mc|}}\ll\frac{4\sqrt{2}(k\ell)^2}{3}\longrightarrow0+.\]
Hence, we can conclude (\ref{error2}) holds. This completes the proof.
\end{proof}

\begin{rem}
  Fortunately, based on Corollary \ref{CorollaryError}, the term (\ref{secondterm}) does not significantly contribute to the imaging performance. Hence, we can say that identified location is close to the true location $\mc_m$. This means that it can be regarded as good initial guess and exact location of $\Sigma_m$ can be retrieved via the Newton-type iteration scheme, two-stage method or level-set strategy, refer to \cite{K,ADIM,DL,IJZ3,KS2,PL3}.
\end{rem}

\section{Two methods of improvement}\label{sec:4}
From now on, we investigate two methods of improvement for obtaining better results than the traditional direct sampling method. The first one is the application of multiple number of incident directions and the second one is the multi-frequency based imaging technique. Notice that since we try to improve imaging performance only, we do not consider the shift phenomenon considered in Section \ref{sec3-3}.

\subsection{Improvement of indicator function: multiple directions of incident fields}\label{sec4-1}
First, we consider an improvement of the direct sampling method using a set of measured far-field pattern data:
\[\mathcal{F}:=\set{\psi_\infty(\vt_n;\md_l):n = 1, 2, \cdots, N,~l = 1, 2, \cdots,L},\]
where we assume that the total number $L$ of the incident fields is small and set
\[\md_l = [\cos\theta_l,\sin\theta_l]^T=\bigg[\cos\frac{2\pi l}{L},\sin\frac{2\pi l}{L}\bigg]^T.\]

In several researches \cite{IJZ1,IJZ2,LZ}, an indicator function of the direct sampling method with a few number of incident fields is designed as follows:
\begin{equation}\label{ImprovedImagingFunction}
\mathcal{I}_{\mathrm{IF}}(\mx):=\max_{\mx\in\mathbb{R}^2}\set{\mathcal{I}(\mx;l):l = 1, 2, \cdots, L},
\end{equation}
where $\mathcal{I}(\mx;l)$ with incident direction $\md=\md_l$ is
\[\mathcal{I}(\mx;l):=\frac{|\langle \psi_\infty(\vt_n,\md_l),e^{-ik\vt_n\cdot\mx}\rangle|}{||\psi_\infty(\vt_n,\md_l)||_{L^2(\mathbb{S}^1)}||e^{-ik\vt_n\cdot\mx}||_{L^2(\mathbb{S}^1)}}.\]
Unfortunately, it is still difficult to identify cracks with relatively small lengths. Due to this reason, we suggest an alternative indicator function $\mathcal{I}_{\mathrm{AIF}}(\mx,L)$ for improving (\ref{ImprovedImagingFunction}):
\begin{equation}\label{ImprovedImagingFunction1}
  \mathcal{I}_{\mathrm{AIF}}(\mx,L):=\frac{|\Psi(\mx,L)|}{\displaystyle\max_{\mx\in\mathbb{R}^2}|\Psi(\mx,L)|},
\end{equation}
where
\begin{equation}\label{ImprovingFactor}
\Psi(\mx,L):=\sum_{l = 1}^{L}e^{-ik\md_l\cdot\mx}\langle \psi_\infty(\vt_n;\md_l),e^{-ik\vt_n\cdot\mx}\rangle.
\end{equation}

Theoretical reason of improvement is derived as follows.

\begin{thm}\label{TheoremImprovement1}
Assume that total number of observation direction $N$ is sufficiently large and incident direction $L$ is small. Then, by letting $\mc_m-\mx=|\mc_m-\mx|[\cos\varphi_m,\sin\varphi_n]^T$, $\mathcal{I}_{\mathrm{AIF}}(\mx,L)$ can be represented as follows:
\begin{equation}\label{StructureImprovement1}
\mathcal{I}_{\mathrm{AIF}}(\mx,L)=\frac{|\Psi_1(\mx)+\Psi_2(\mx,L)|}{\displaystyle\max_{\mx\in\mathbb{R}^2}|\Psi_1(\mx)+\Psi_2(\mx,L)|},
\end{equation}
where
\begin{align*}
\Psi_1(\mx)&=\sum_{m=1}^{M}\frac{2\pi^2}{\ln(\ell_m/2)}J_0(k|\mx-\mc_m|)^2\\
\Psi_2(\mx,L)&=\frac{1}{L}\sum_{m=1}^{M}\sum_{l = 1}^{L}\sum_{s=1}^{\infty}\frac{2\pi^2i^s}{\ln(\ell_m/2)}J_0(k|\mx-\mc_m|)J_s(k|\mx-\mc_m|)\cos(s(\varphi_m-\theta_l)).
\end{align*}
\end{thm}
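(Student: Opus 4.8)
The plan is to reuse the first-order computation already carried out in the proof of Theorem \ref{TheoremStructure1} and then perform the additional averaging over incident directions that defines $\Psi(\mx,L)$. First I would record that, for each fixed incident direction $\md_l$, applying Lemma \ref{LemmaAsymptotic1} and resolving the observation-direction sum by Lemma \ref{TheoremBessel1} gives
\[
\langle\psi_\infty(\vt_n,\md_l),e^{-ik\vt_n\cdot\mx}\rangle\approx\sum_{m=1}^{M}\frac{(2\pi)^2}{\ln(\ell_m/2)}e^{ik\md_l\cdot\mc_m}J_0(k|\mx-\mc_m|).
\]
Here it is crucial that the sum over $\vt_n$ (with $N$ large) is exactly what collapses into the single factor $J_0(k|\mx-\mc_m|)$; this is the only place Lemma \ref{TheoremBessel1} is used.

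Next I would substitute this into the definition (\ref{ImprovingFactor}) of $\Psi(\mx,L)$, interchange the two finite sums over $l$ and $m$, and group the incident-direction phase factors into a single exponential, obtaining
\[
\Psi(\mx,L)\approx\sum_{m=1}^{M}\frac{(2\pi)^2}{\ln(\ell_m/2)}J_0(k|\mx-\mc_m|)\sum_{l=1}^{L}e^{ik\md_l\cdot(\mc_m-\mx)}.
\]
The essential structural point is the asymmetry between the two sums: the observation sum has already been turned into a Bessel function, whereas the incident sum is genuinely finite ($L$ small) and cannot be collapsed by Lemma \ref{TheoremBessel1}, so it must be kept and expanded.

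The heart of the proof is to expand the remaining exponentials. Writing $\mc_m-\mx=|\mc_m-\mx|[\cos\varphi_m,\sin\varphi_m]^{T}$ and $\md_l=[\cos\theta_l,\sin\theta_l]^{T}$, the dot product becomes $k\md_l\cdot(\mc_m-\mx)=k|\mc_m-\mx|\cos(\varphi_m-\theta_l)$, and the Jacobi--Anger expansion
\[
e^{iz\cos\alpha}=J_0(z)+2\sum_{s=1}^{\infty}i^{s}J_s(z)\cos(s\alpha),
\]
with $z=k|\mx-\mc_m|$ and $\alpha=\varphi_m-\theta_l$, converts each phase exponential into a Bessel series. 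The $s=0$ contribution, summed over the $L$ incident directions, produces the factor $L$ and the $J_0(k|\mx-\mc_m|)^2$ structure that becomes $\Psi_1(\mx)$; the $s\geq1$ contributions retain the double sum over $l$ and $s$ and produce the $J_0\,J_s$ cross terms forming $\Psi_2(\mx,L)$. Dividing by $\max_{\mx}|\Psi(\mx,L)|$ then cancels the common $\mx$-independent constant and yields the normalized form (\ref{StructureImprovement1}).

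The step I expect to be the main obstacle is precisely this separation of the two sums together with the term-by-term use of the Jacobi--Anger series: it is the combination of the already-resolved $n$-sum (one $J_0$) with the expanded finite $l$-sum (the $J_s$ series) that breaks the clean single-Bessel picture of Theorem \ref{TheoremStructure1} and forces the higher-order $J_s$ terms to appear. Some care is also needed in tracking the multiplicative constants through the normalization, since the relative weighting of $\Psi_1$ and $\Psi_2$ (unlike a global prefactor) genuinely affects the reported structure.
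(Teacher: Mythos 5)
Your argument matches the paper's proof essentially step for step: both reuse the per-direction first-order identity from the proof of Theorem \ref{TheoremStructure1}, interchange the $l$- and $m$-sums to isolate $\sum_{l=1}^{L}e^{ik\md_l\cdot(\mc_m-\mx)}$, and—since $L$ is small and Lemma \ref{TheoremBessel1} cannot collapse that sum—expand it by the Jacobi--Anger formula, with the $s=0$ term giving $\Psi_1$ and the $s\geq1$ tail giving $\Psi_2$. Your closing caution about constants is well placed: carrying the Jacobi--Anger factor of $2$ through the normalization shows that, relative to $\Psi_1$, the stated $\Psi_2$ should carry twice the coefficient given in the theorem (i.e., $4\pi^2 i^s$ rather than $2\pi^2 i^s$ after dividing the common factor), a harmless slip in the paper that your bookkeeping would detect.
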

\begin{proof}
Based on the proof of Theorem \ref{TheoremStructure1}, $\Psi(\mx,L)$ can be written
\begin{align*}
\Psi(\mx,L)&=\sum_{l = 1}^{L}e^{-ik\md_l\cdot\mx}\langle \psi_\infty(\vt_n;\md_l),e^{-ik\vt_n\cdot\mx}\rangle=\sum_{l = 1}^{L}e^{-ik\md_l\cdot\mx}\left(\sum_{m=1}^{M}\frac{(2\pi)^2}{\ln(\ell_m/2)}e^{ik\md_l\cdot\mc_m}J_0(k|\mx-\mc_m|)\right)\\
&=\sum_{m=1}^{M}\frac{(2\pi)^2}{\ln(\ell_m/2)}J_0(k|\mx-\mc_m|)\left(\sum_{l = 1}^{L}e^{ik\md_l\cdot(\mc_m-\mx)}\right).
\end{align*}
Since $L$ is not sufficiently large, we cannot apply Lemma \ref{TheoremBessel1}. Instead of this, applying Jacobi-Anger expansion
\begin{equation}\label{Jacobi-Anger}
e^{iz\cos\phi}=J_0(z)+2\sum_{s=1}^{\infty}i^sJ_s(z)\cos(s\phi)
\end{equation}
yields
\begin{align*}
\sum_{l = 1}^{L}e^{ik\md_l\cdot(\mc_m-\mx)}&=\sum_{l = 1}^{L}e^{ik|\mc_m-\mx|\cos(\varphi_m-\theta_l)}=\sum_{l = 1}^{L}\left(J_0(k|\mc_m-\mx|)+2\sum_{s=1}^{\infty}i^sJ_s(k|\mc_m-\mx|)\cos(s(\varphi_m-\theta_l))\right).
\end{align*}
Hence, we arrive
\[\Psi(\mx,L)=\sum_{m=1}^{M}\frac{(2\pi)^2}{\ln(\ell_m/2)}J_0(k|\mx-\mc_m|)\left(LJ_0(k|\mc_m-\mx|)+2\sum_{l = 1}^{L}\sum_{s=1}^{\infty}i^sJ_s(k|\mc_m-\mx|)\cos(s(\varphi_m-\theta_l))\right)\]
and correspondingly, we can obtain the result (\ref{StructureImprovement1}). This completes the proof.
\end{proof}

\begin{rem}\label{Observation3}Based on the structures (\ref{Structure1}) and (\ref{StructureImprovement1}), we can easily observe that
\begin{equation}\label{Compare1}
\mathcal{I}_{\mathrm{IF}}(\mx)\propto|J_0(k|\mx-\mc_m|)|\quad\mbox{and}\quad\mathcal{I}_{\mathrm{AIF}}(\mx,L)\propto J_0(k|\mx-\mc_m|)^2.
\end{equation}
Two-dimensional plot for (\ref{Compare1}) is shown in Figure \ref{FigureCompare1}. By considering the oscillation pattern, we can easily observe that $\mathcal{I}_{\mathrm{AIF}}(\mx,L)$ yields better images owing to less oscillation than $\mathcal{I}_{\mathrm{IF}}(\mx)$ does. Furthermore, based on $\Psi_2(\mx,L)$, unexpected artifacts in the map of $\mathcal{I}_{\mathrm{AIF}}(\mx,L)$ are mitigated when $L$ is sufficiently large. This result indicates why increasing total number of incident fields guarantee good results.
\end{rem}

\begin{figure}[h]
  \begin{center}
    \includegraphics[width=0.495\textwidth]{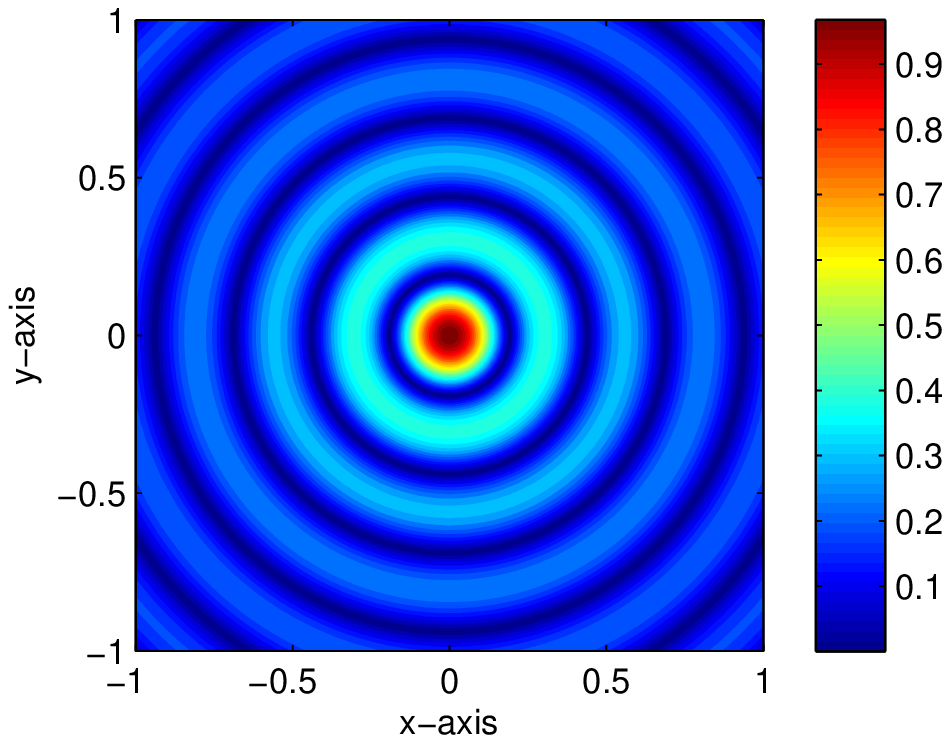}
    \includegraphics[width=0.495\textwidth]{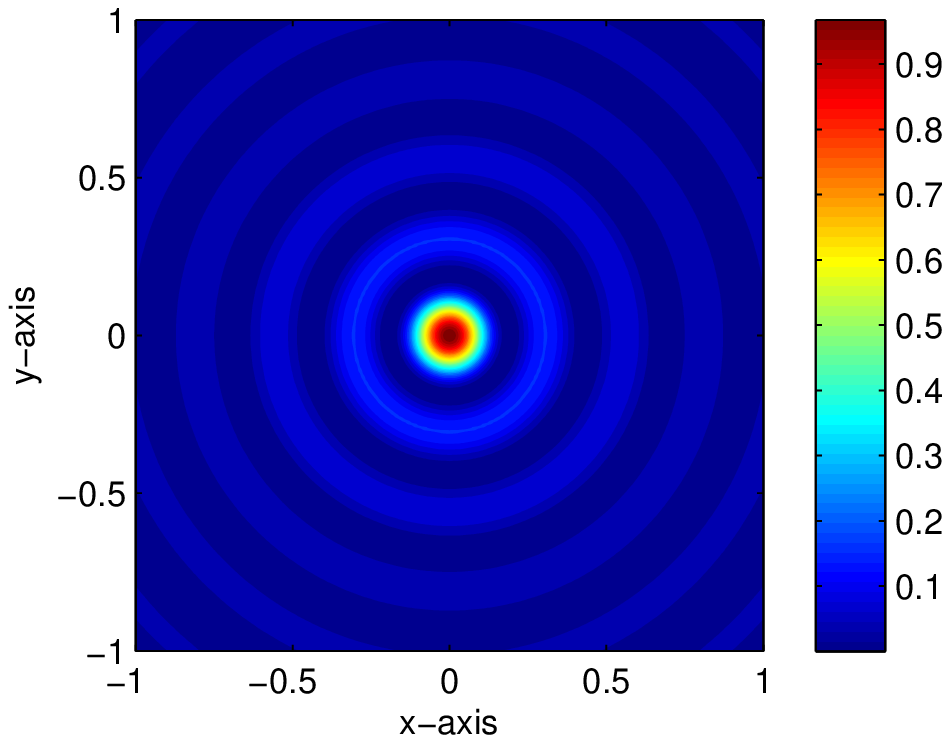}
    \caption{\label{FigureCompare1}Two-dimensional plot of $|J_0(k|\mx|)|$ (left) and $J_0(k|\mx|)^2$ (right) for $k=2\pi/0.5$.}
  \end{center}
\end{figure}

\begin{ex}[Comparing $\mathcal{I}_{\mathrm{IF}}(\mx)$ and $\mathcal{I}_{\mathrm{AIF}}(\mx,L)$]
Figure \ref{DSM-IF-AIF} shows the maps of $\mathcal{I}_{\mathrm{IF}}(\mx)$ and $\mathcal{I}_{\mathrm{AIF}}(\mx,L)$ for $L=2,3,4$ incident directions. Based on these results, it is clearly difficult to discriminate the locations of $\Sigma_2$ and $\Sigma_3$ from the map of $\mathcal{I}_{\mathrm{AIF}}(\mx,L)$ when $L=2$ and $3$ due to the appearance of abundant artifacts. When $L=4$ directions are used, it is possible to identify the $\Sigma_m$ locations from the map of $\mathcal{I}_{\mathrm{AIF}}(\mx,L)$; however, it still contains some artifacts with large magnitude.

It is interesting to observe that although a huge number of artifacts disturbs imaging performance, it is possible to identify locations of $\Sigma_1$ and $\Sigma_3$ from the map of $\mathcal{I}_{\mathrm{IF}}(\mx)$ when $L=2$ and $4$. Hence, it is hard to say that $\mathcal{I}_{\mathrm{AIF}}(\mx,L)$ is an improved version of $\mathcal{I}_{\mathrm{IF}}(\mx)$ when $L$ is small. This supports discussion in Remark \ref{Observation3}.
\end{ex}

\begin{figure}[h]
  \begin{center}
    \includegraphics[width=0.495\textwidth]{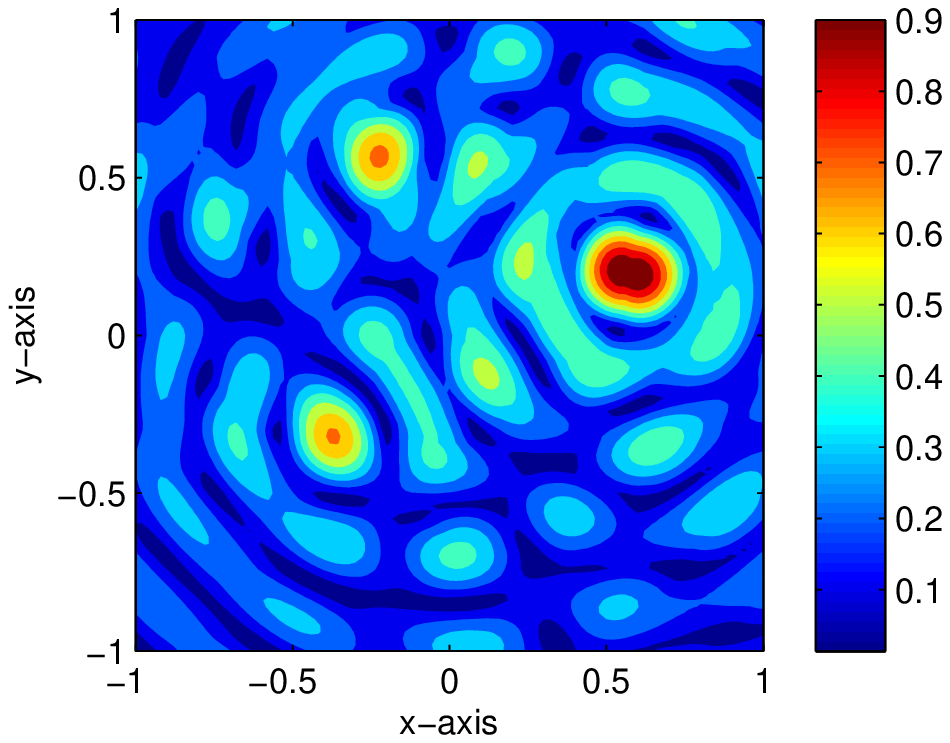}
    \includegraphics[width=0.495\textwidth]{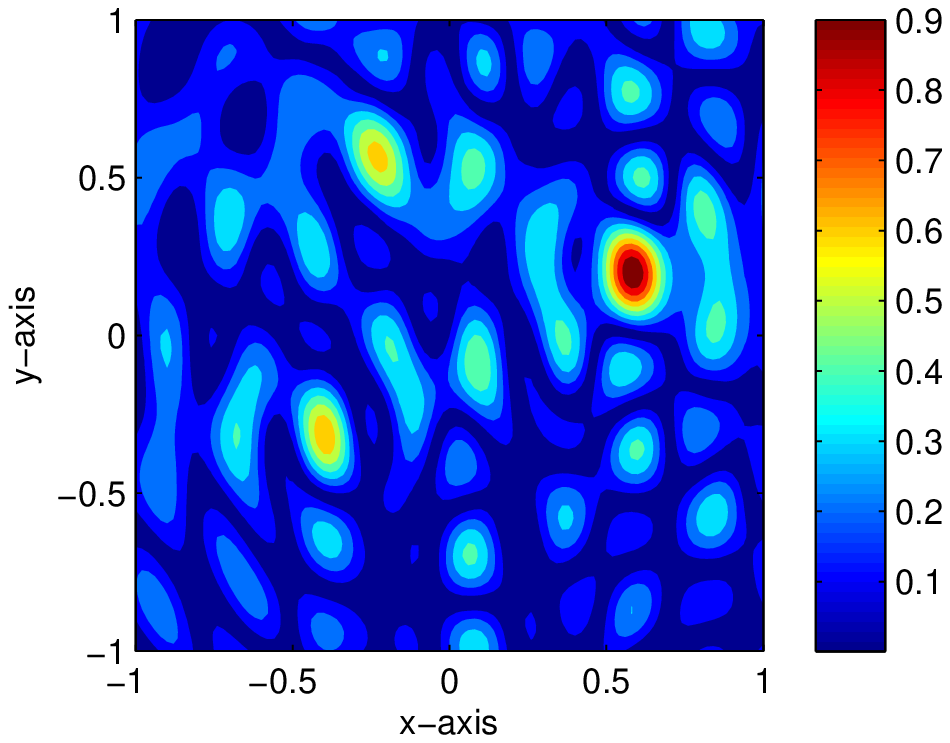}\\
    \includegraphics[width=0.495\textwidth]{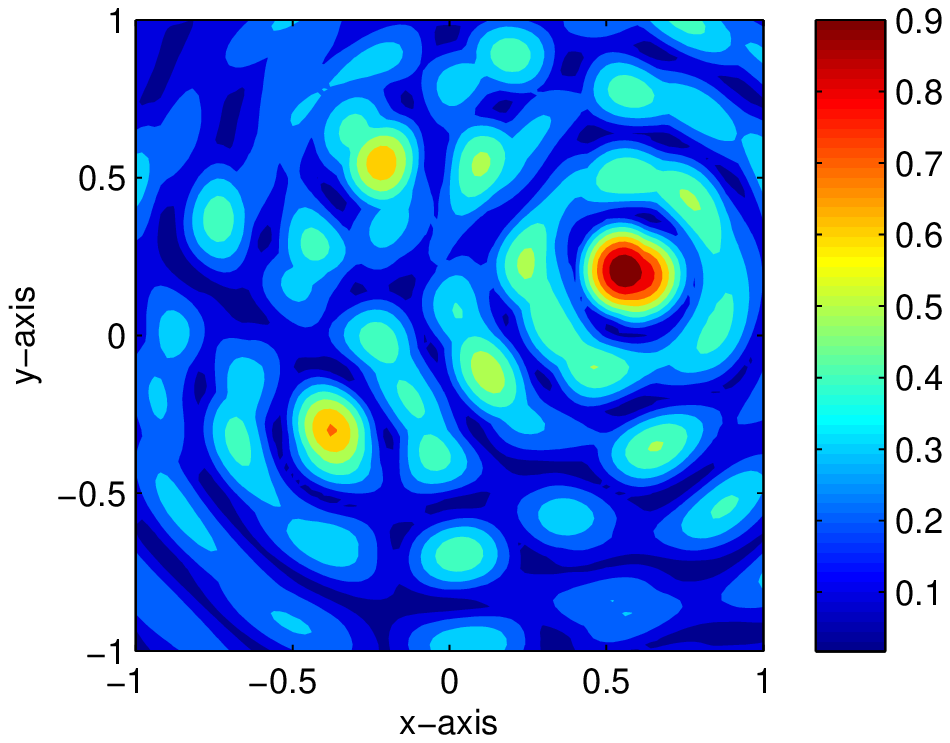}
    \includegraphics[width=0.495\textwidth]{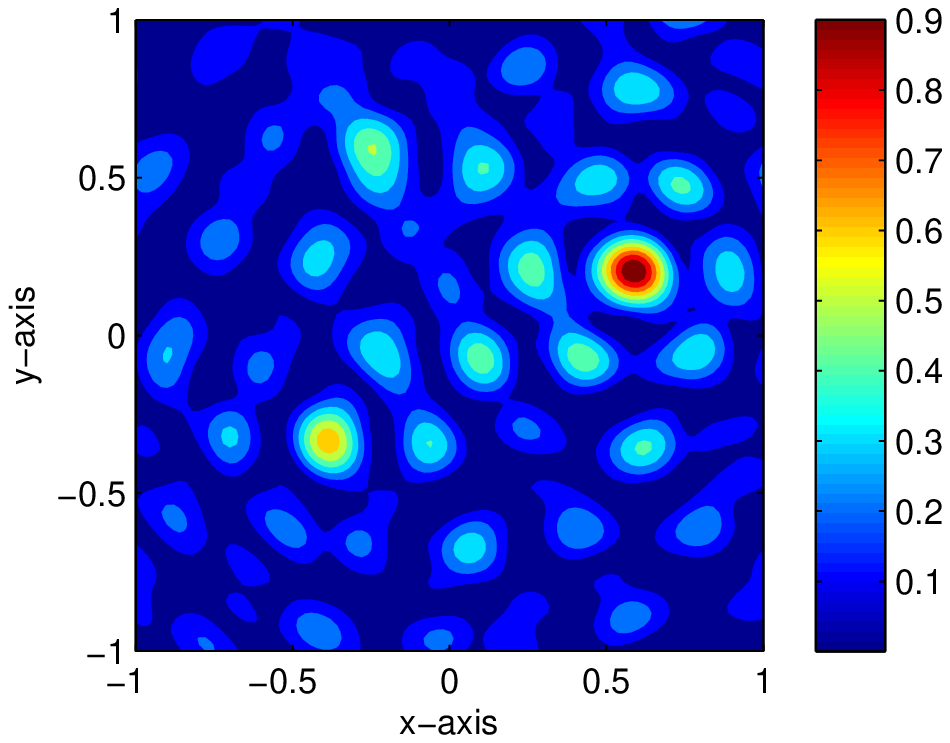}\\
    \includegraphics[width=0.495\textwidth]{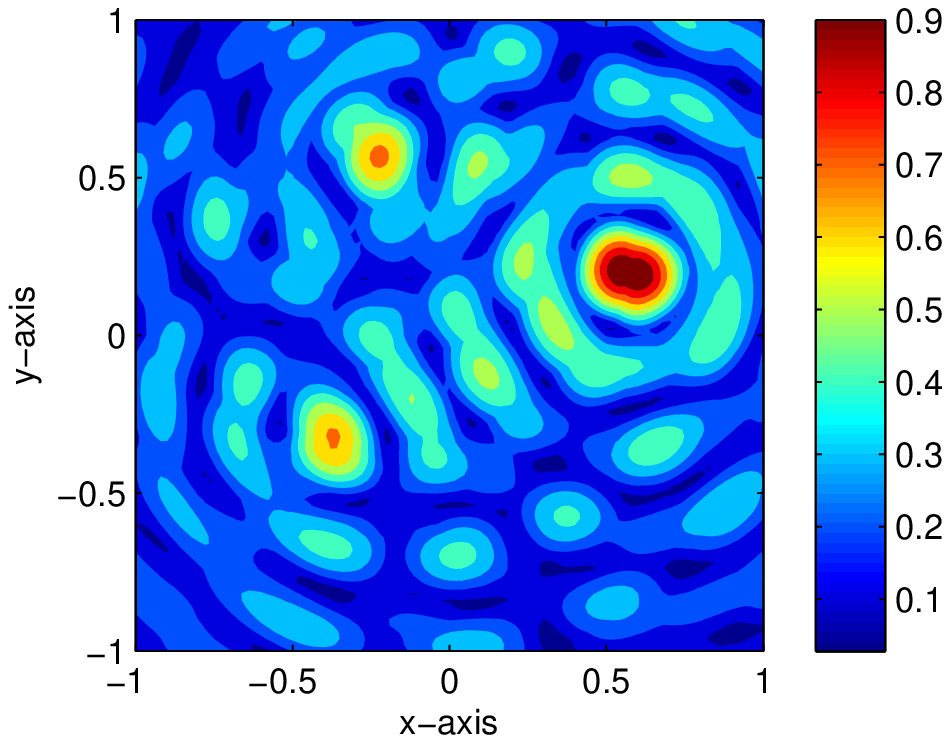}
    \includegraphics[width=0.495\textwidth]{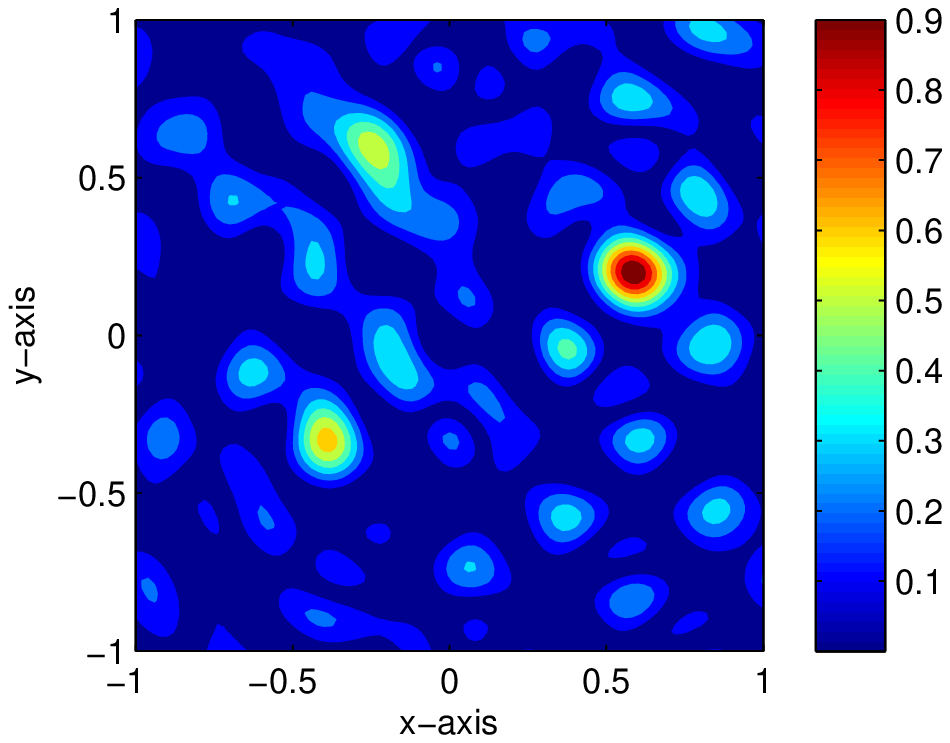}
    \caption{\label{DSM-IF-AIF}Maps of $\mathcal{I}_{\mathrm{IF}}(\mx)$ (left column) and $\mathcal{I}_{\mathrm{AIF}}(\mx,L)$ (right column) for $L=2$ (top), $L=3$ (middle), and $L=4$ (bottom).}
  \end{center}
\end{figure}

\begin{ex}[Influence of total number of incident directions]
Figure \ref{DSM-AIF} shows the maps of $\mathcal{I}_{\mathrm{AIF}}(\mx,L)$ for $L=5,6,7,8$ incident directions. Opposite to the results in Figure \ref{DSM-IF-AIF}, map of $\mathcal{I}_{\mathrm{AIF}}(\mx,L)$ yields satisfactory results when the total number of $L$ increases such that $L\geq5$. It is worth observing that the locations of all $\Sigma_m$ were well-identified via the map of $\mathcal{I}_{\mathrm{AIF}}(\mx,6)$; however, due to the existence of some artifacts with large magnitude, it is still difficult to discriminate the locations of $\Sigma_2$ and $\Sigma_3$.
\end{ex}

\begin{figure}[h]
  \begin{center}
    \includegraphics[width=0.495\textwidth]{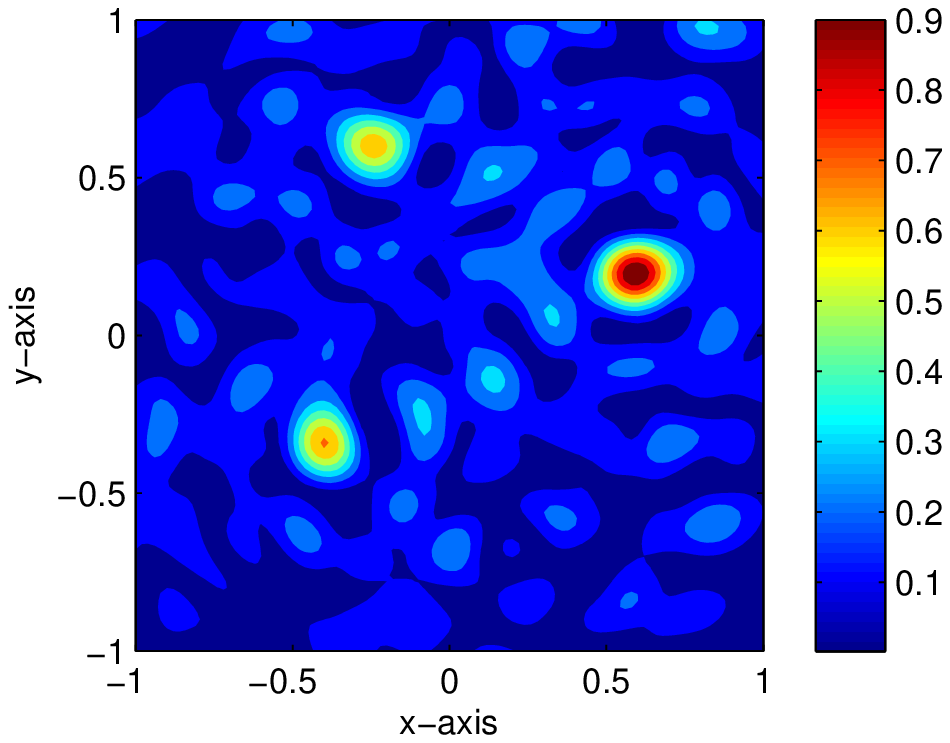}
    \includegraphics[width=0.495\textwidth]{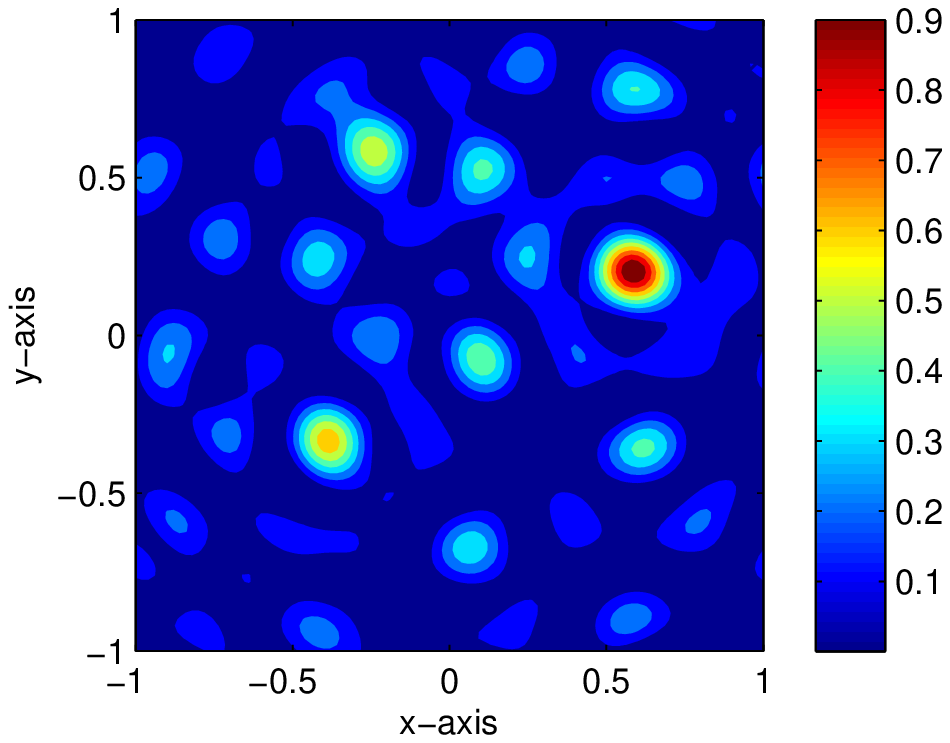}\\
    \includegraphics[width=0.495\textwidth]{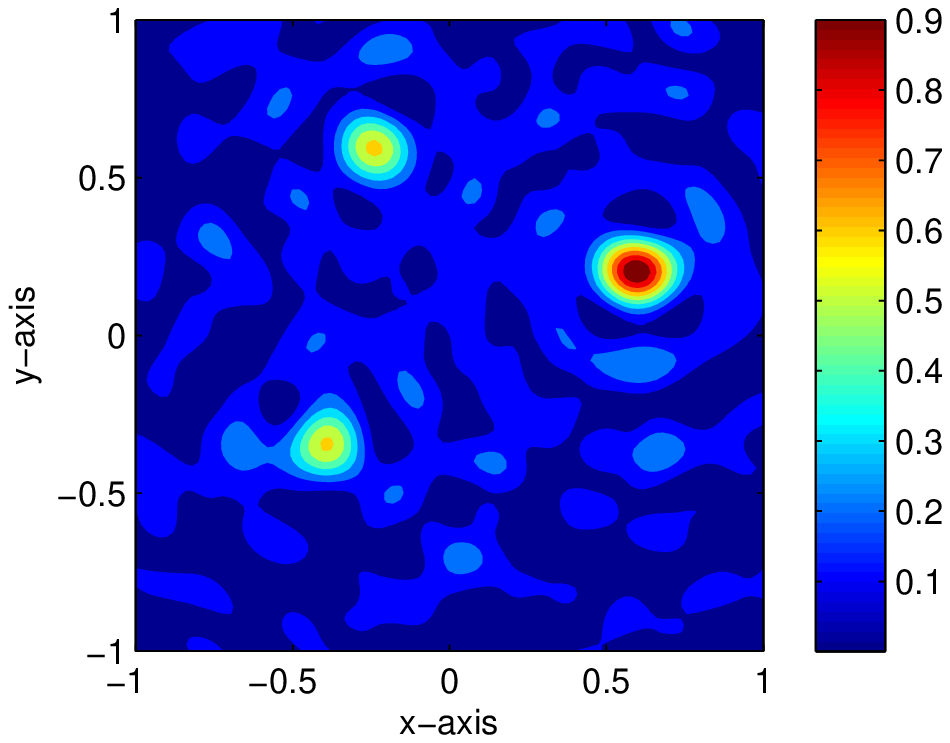}
    \includegraphics[width=0.495\textwidth]{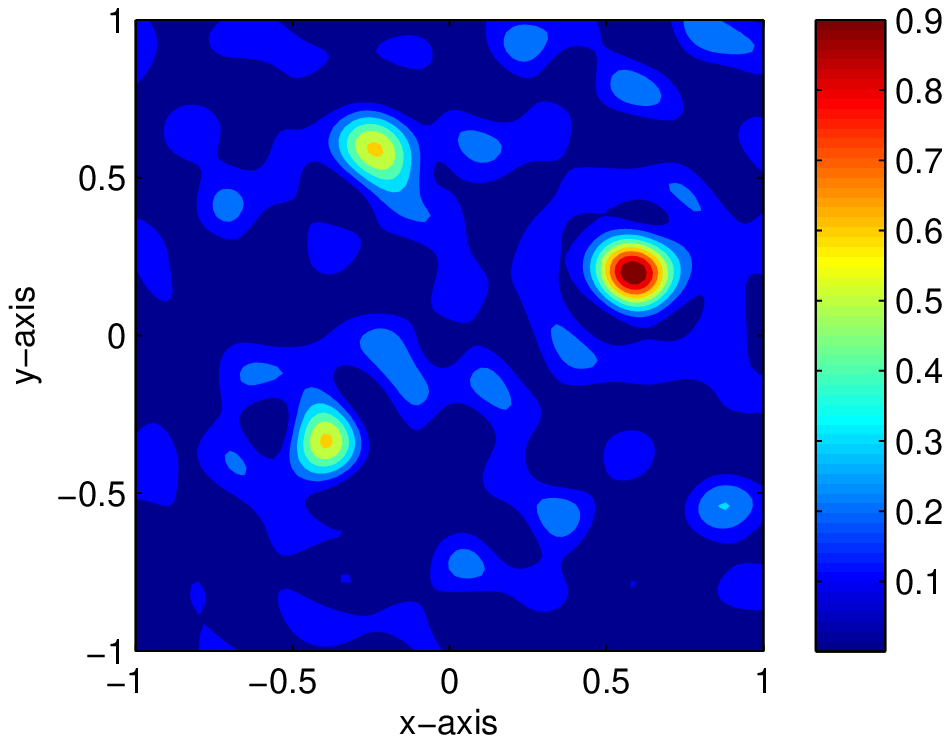}
    \caption{\label{DSM-AIF}Maps of $\mathcal{I}_{\mathrm{AIF}}(\mx,L)$ for $L=5$ (top, left), $L=6$ (top, right), $L=7$ (bottom, left), and $L=8$ (bottom, right).}
  \end{center}
\end{figure}

\subsection{Improvement of indicator function: application of multiple frequencies}\label{sec4-2}
One of famous and useful method is application of multiple wavenumbers $k_f$, $f=1,2,\cdots,F$. In this section, we consider the following multi-frequency indicator function
\begin{equation}\label{ImprovedImagingFunction2}
\mathcal{I}_{\mathrm{MIF}}(\mx,F):=\frac{|\mathcal{I}_{\mathrm{MF}}(\mx,k_f)|}{\displaystyle\max_{\mx\in\mathbb{R}^2}|\mathcal{I}_{\mathrm{MF}}(\mx,k_f)|},
\end{equation}
where based on (\ref{ImprovingFactor}), $\mathcal{I}_{\mathrm{MF}}(\mx,k_f)$ is given by
\[\mathcal{I}_{\mathrm{MF}}(\mx,k_f):=\sum_{f=1}^{F}e^{-ik_f\md\cdot\mx}\langle \psi_\infty(\vt_n,\md,k_f),e^{-ik_f\vt_n\cdot\mx}\rangle.\]
Here, $\psi_\infty(\vt_n,\md,k_f)$ denotes the far-field pattern (\ref{FarFieldPattern}) at wavenumber $k_f$ and $F$ denotes total number of applied wavenumber. Similar to the results \cite{P-SUB3,AGKPS,HHSZ,P-TD1,P-TD5}, application of multiple frequencies guarantees better imaging results than the single frequency. Theoretical reason of this phenomenon is follows. In this case, we apply $L=1$ number of incident field.

\begin{thm}\label{TheoremImprovement2}
Assume that total number of observation direction $N$ and applied number of wavenumber $F$ are sufficiently large. Then, by letting $\mc_m-\mx=|\mc_m-\mx|[\cos\varphi_m,\sin\varphi_n]^T$ and $k_1<k_2<\cdots<k_F$, $\mathcal{I}_{\mathrm{MIF}}(\mx,F)$ can be represented as follows:
\begin{equation}\label{StructureImprovement2}
\mathcal{I}_{\mathrm{MIF}}(\mx,F)=\frac{|\Psi_3(\mx)+\Psi_4(\mx)|}{\displaystyle\max_{\mx\in\mathbb{R}^2}|\Psi_3(\mx)+\Psi_4(\mx)|},
\end{equation}
where
\begin{align*}
\Psi_3(\mx)&=\sum_{m=1}^{M}\frac{(2\pi)^2}{\ln(\ell_m/2)}\left[k_F\bigg(J_0(k_F|\mx-\mc_m|)^2+J_1(k_F|\mx-\mc_m|)^2\bigg)-k_1\bigg(J_0(k_1|\mx-\mc_m|)^2+J_1(k_1|\mx-\mc_m|)^2\bigg)\right]\\
\Psi_4(\mx)&=\sum_{m=1}^{M}\frac{(2\pi)^2}{\ln(\ell_m/2)}\int_{k_1}^{k_F}\bigg(J_1(k|\mx-\mc_m|)^2+2\sum_{s=1}^{\infty}i^sJ_0(k|\mx-\mc_m|)J_s(k|\mx-\mc_m|)\cos(s(\varphi_m-\theta))\bigg)dk.
\end{align*}
\end{thm}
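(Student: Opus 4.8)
The plan is to mirror the argument of Theorem \ref{TheoremImprovement1}, replacing the sum over incident directions by a sum over wavenumbers and then converting the latter into an integral. First I would invoke the computation inside the proof of Theorem \ref{TheoremStructure1}, applied at each fixed wavenumber $k_f$, to obtain
\[\langle \psi_\infty(\vt_n,\md,k_f),e^{-ik_f\vt_n\cdot\mx}\rangle\approx\sum_{m=1}^{M}\frac{(2\pi)^2}{\ln(\ell_m/2)}e^{ik_f\md\cdot\mc_m}J_0(k_f|\mx-\mc_m|).\]
Multiplying by $e^{-ik_f\md\cdot\mx}$ and summing over $f$ then gives
\[\mathcal{I}_{\mathrm{MF}}(\mx,k_f)\approx\sum_{m=1}^{M}\frac{(2\pi)^2}{\ln(\ell_m/2)}\sum_{f=1}^{F}e^{ik_f\md\cdot(\mc_m-\mx)}J_0(k_f|\mx-\mc_m|).\]
Since $F$ is assumed large and $k_1<\cdots<k_F$, I would replace the inner sum by the Riemann integral $\int_{k_1}^{k_F}\cdots\,dk$, absorbing the uniform spacing factor into the normalization constant that is divided out in (\ref{ImprovedImagingFunction2}).

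Next, writing $\md\cdot(\mc_m-\mx)=|\mc_m-\mx|\cos(\varphi_m-\theta)$ with $\md=[\cos\theta,\sin\theta]^T$, I would apply the Jacobi--Anger expansion (\ref{Jacobi-Anger}) to $e^{ik|\mc_m-\mx|\cos(\varphi_m-\theta)}$ exactly as in Theorem \ref{TheoremImprovement1}. The integrand for each $m$ becomes
\[J_0(k|\mx-\mc_m|)^2+2\sum_{s=1}^{\infty}i^sJ_0(k|\mx-\mc_m|)J_s(k|\mx-\mc_m|)\cos(s(\varphi_m-\theta)),\]
so that the $s\geq1$ cross terms integrate directly to the second group appearing in $\Psi_4(\mx)$.

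The step I expect to be the crux is evaluating $\int_{k_1}^{k_F}J_0(k|\mx-\mc_m|)^2\,dk$ in closed form. Writing $r=|\mx-\mc_m|$ and using $J_0'=-J_1$ together with $(xJ_1(x))'=xJ_0(x)$, one verifies the identity
\[\frac{d}{dk}\Big[k\big(J_0(kr)^2+J_1(kr)^2\big)\Big]=J_0(kr)^2-J_1(kr)^2,\]
whence $\int_{k_1}^{k_F}J_0(kr)^2\,dk=\big[k(J_0(kr)^2+J_1(kr)^2)\big]_{k_1}^{k_F}+\int_{k_1}^{k_F}J_1(kr)^2\,dk$. The boundary contribution is precisely $\Psi_3(\mx)$ once the factor $(2\pi)^2/\ln(\ell_m/2)$ is restored and the sum over $m$ is taken, while the leftover $\int_{k_1}^{k_F}J_1(kr)^2\,dk$ supplies the first term inside $\Psi_4(\mx)$.

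Collecting the boundary term into $\Psi_3$ and the two remaining integral contributions into $\Psi_4$, and dividing by the maximum over $\mx$, then yields the representation (\ref{StructureImprovement2}). The only delicate points are justifying the Riemann-sum-to-integral passage, which is legitimate for large $F$, and the termwise integration of the Jacobi--Anger series; both I would handle in the same spirit as the preceding theorem, treating the series as uniformly convergent on the bounded frequency band $[k_1,k_F]$ with $r$ fixed.
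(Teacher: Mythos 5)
Your proposal is correct and follows essentially the same route as the paper's proof: reduce to the single-frequency computation of Theorem \ref{TheoremStructure1} at each $k_f$, pass from the sum over wavenumbers to the integral $\int_{k_1}^{k_F}\,dk$ (with the spacing constant absorbed by the normalization), expand $e^{ik\md\cdot(\mc_m-\mx)}$ via the Jacobi--Anger formula (\ref{Jacobi-Anger}), and split $\int_{k_1}^{k_F}J_0(kr)^2\,dk$ into the boundary term $\bigl[k\bigl(J_0(kr)^2+J_1(kr)^2\bigr)\bigr]_{k_1}^{k_F}$ giving $\Psi_3$ plus the residual $\int_{k_1}^{k_F}J_1(kr)^2\,dk$ feeding $\Psi_4$. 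The only cosmetic difference is that you verify the antiderivative identity $\frac{d}{dk}\bigl[k\bigl(J_0(kr)^2+J_1(kr)^2\bigr)\bigr]=J_0(kr)^2-J_1(kr)^2$ by direct differentiation, whereas the paper simply quotes the corresponding indefinite integral $\int J_0(x)^2dx=x\Lambda(x)+\int J_1(x)^2dx$.
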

\begin{proof}
Applying Jacobi-Anger expansion (\ref{Jacobi-Anger}) to $\mathcal{I}_{\mathrm{MF}}(\mx,k_f)$ yields
\begin{align*}
\mathcal{I}_{\mathrm{MF}}(\mx,k_f)&=\sum_{f=1}^{F}e^{-ik_f\md\cdot\mx}\langle \psi_\infty(\vt_n,\md,k_f),e^{-ik_f\vt_n\cdot\mx}\rangle=\sum_{f=1}^{F}\sum_{m=1}^{M}\frac{(2\pi)^2}{\ln(\ell_m/2)}e^{ik_f\md\cdot(\mc_m-\mx)}J_0(k_f|\mx-\mc_m|)\\
&=\sum_{m=1}^{M}\frac{(2\pi)^2}{\ln(\ell_m/2)}\sum_{f=1}^{F}\left(J_0(k_f|\mc_m-\mx|)+2\sum_{s=1}^{\infty}i^sJ_s(k_f|\mc_m-\mx|)\cos(s(\varphi_m-\theta))\right)J_0(k_f|\mx-\mc_m|)\\
&\approx\sum_{m=1}^{M}\frac{(2\pi)^2}{\ln(\ell_m/2)}\frac{1}{k_F-k_1}\int_{k_1}^{k_F}\left(J_0(k|\mx-\mc_m|)^2+2\sum_{s=1}^{\infty}i^sJ_0(k|\mx-\mc_m|)J_s(k|\mx-\mc_m|)\cos(s(\varphi_m-\theta))\right)dk.
\end{align*}
Based on an indefinite integral of Bessel function
\[\int J_0(x)^2dx=x\bigg(J_0(x)^2 +J_1(x)^2\bigg)+\int J_1(x)^2dx:=x\Lambda(x)+\int J_1(x)^2dx,\]
we can obtain
\begin{multline*}
\mathcal{I}_{\mathrm{MF}}(\mx,k_f)\approx\sum_{m=1}^{M}\frac{(2\pi)^2}{\ln(\ell_m/2)}\left[\frac{k_F}{k_F-k_1}\Lambda(k_F|\mx-\mc_m|)-\frac{k_1}{k_F-k_1}\Lambda(k_1|\mx-\mc_m|)\right.\\
\left.+\frac{1}{k_F-k_1}\int_{k_1}^{k_F}\bigg(J_1(k|\mx-\mc_m|)^2+2\sum_{s=1}^{\infty}i^sJ_0(k|\mx-\mc_m|)J_s(k|\mx-\mc_m|)\cos(s(\varphi_m-\theta))\bigg)dk\right].
\end{multline*}
Therefore, (\ref{StructureImprovement2}) derived. This completes the proof.
\end{proof}

\begin{rem}\label{Observation4}Based on results in Theorem (\ref{TheoremImprovement1}) and (\ref{TheoremImprovement2}), we can easily observe that
\begin{equation}\label{Compare2}
\mathcal{I}_{\mathrm{AIF}}(\mx,L)\propto J_0(k|\mx-\mc_m|)^2\quad\mbox{and}\quad\mathcal{I}_{\mathrm{MIF}}(\mx,F)\propto\left|\frac{k_F}{k_F-k_1}\Lambda(k_F|\mx-\mc_m|)-\frac{k_1}{k_F-k_1}\Lambda(k_1|\mx-\mc_m|)\right|.
\end{equation}
Two-dimensional plot for (\ref{Compare2}) is shown in Figure \ref{FigureCompare2}. Similar to the Remark \ref{Observation3}, $\mathcal{I}_{\mathrm{MIF}}(\mx,F)$ will yield better results owing to less oscillation than $\mathcal{I}_{\mathrm{AIF}}(\mx,L)$ does if total number of applied frequencies $F$ and incident directions $L$ are large and small, respectively. If $F$ and $L$ are sufficiently large, it is hard to compare the imaging performance because the terms $\Psi_2(\mx,L)$ of (\ref{StructureImprovement1}) and $\Psi_4(\mx)$ of (\ref{StructureImprovement2}) can be disregarded.
\end{rem}

\begin{figure}[h]
  \begin{center}
    \includegraphics[width=0.495\textwidth]{Bessel2.eps}
    \includegraphics[width=0.495\textwidth]{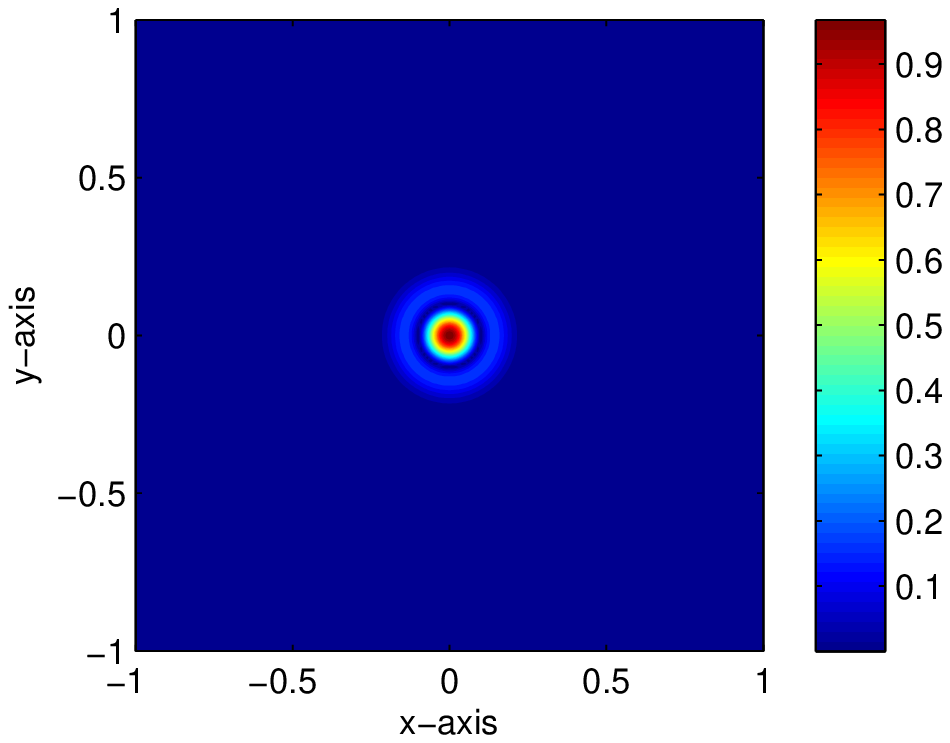}
    \caption{\label{FigureCompare2}Two-dimensional plot of $J_0(k|\mx|)^2$ for $k=2\pi/0.5$ (left) and $\left|k_F\Lambda(k_F|\mx|)-k_1\Lambda(k_1|\mx|)\right|/(k_F-k_1)$ for $k_1=2\pi/0.7$ and $k_F=2\pi/0.3$ (right).}
  \end{center}
\end{figure}

\begin{ex}[Influence of total number of frequencies]
Now, we perform numerical simulations for supporting Theorem \ref{TheoremImprovement2}. In this example, the wavelengths $\lambda_f$ are uniformly distributed in the interval $[\lambda_1,\lambda_F]$. Figure \ref{DSM-MIF} shows maps of $\mathcal{I}_{\mathrm{MIF}}(\mx,F)$ for $F=3,5,7,10$. Based on the results, $F=5$ is sufficient for obtaining a good result. Notice that as we discussed in Remark \ref{Observation4} and simulation results, increasing $F$ yields more better image.
\end{ex}

\begin{figure}[h]
  \begin{center}
    \includegraphics[width=0.495\textwidth]{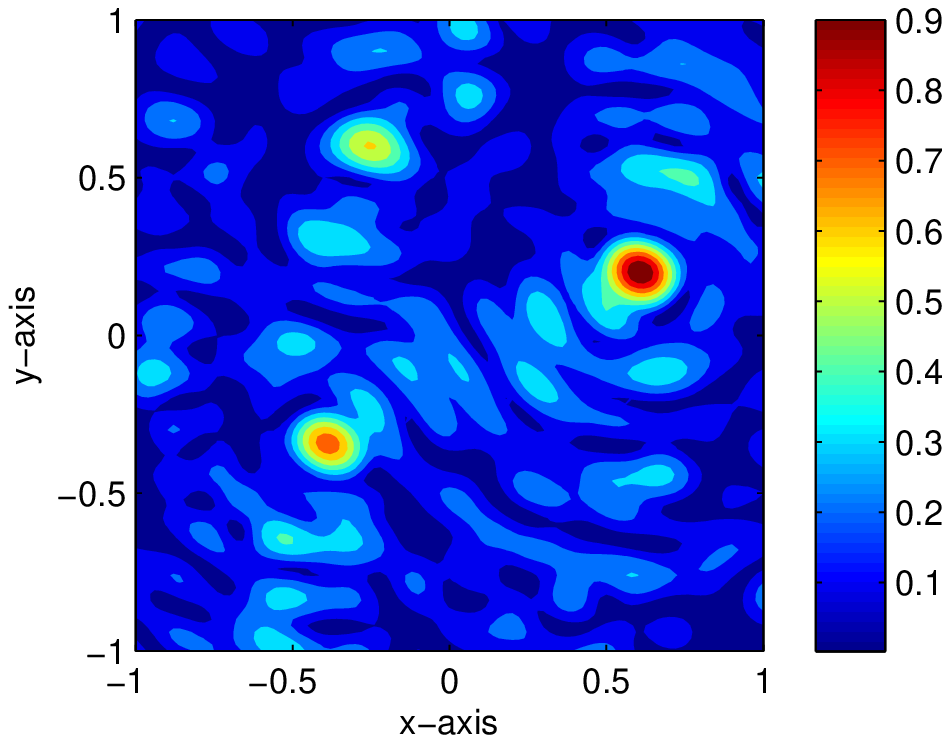}
    \includegraphics[width=0.495\textwidth]{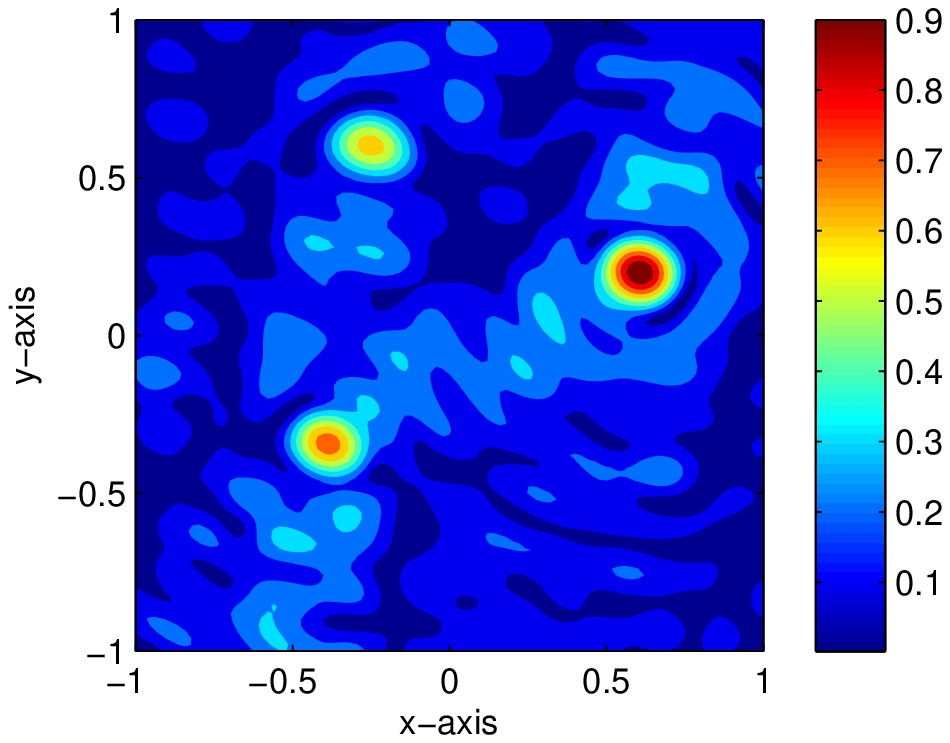}\\
    \includegraphics[width=0.495\textwidth]{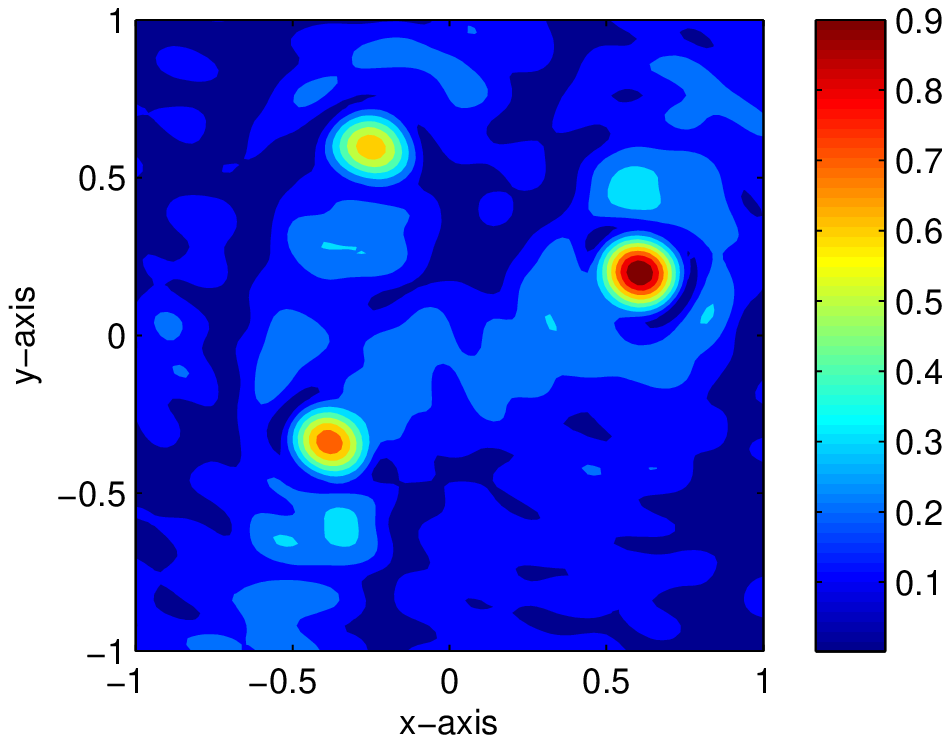}
    \includegraphics[width=0.495\textwidth]{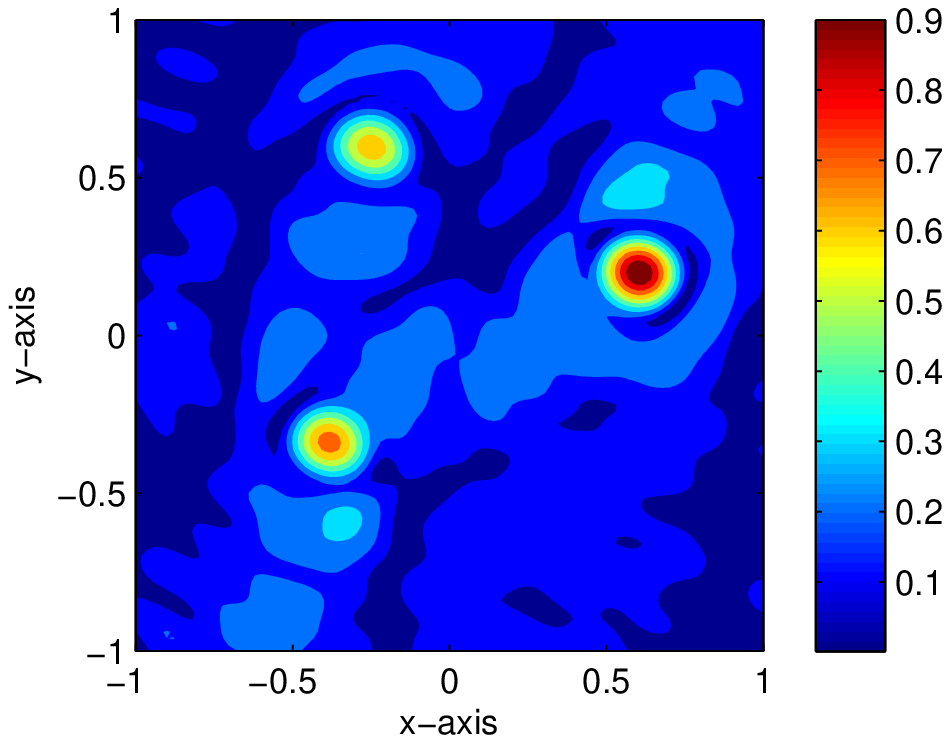}
    \caption{\label{DSM-MIF}Maps of $\mathcal{I}_{\mathrm{MIF}}(\mx,F)$ for $F=3$ (top, left), $F=5$ (top, right), $F=7$ (bottom, left), and $F=10$ (bottom, right).}
  \end{center}
\end{figure}

%\begin{rem}[Comparison of imaging performance]\label{RemarkImprovement}
%By comparing the results in Theorem \ref{TheoremImprovement1} and \ref{TheoremImprovement2}, we can observe the following.
%\begin{enumerate}
%\item On the basis of the comparison between $\Psi_2(\mx,L)$ and $\Psi_4(\mx)$, imaging performance of $\mathcal{I}_{\mathrm{MIF}}(\mx,F)$ will be better than the one of $\mathcal{I}_{\mathrm{AIF}}(\mx,L)$ if the total number of incident fields is small.
%\item In contrast, the factor $1/L$ of $\Psi_2(\mx,L)$ can be disregarded if $L$ is sufficiently large. Hence, imaging performance of $\mathcal{I}_{\mathrm{AIF}}(\mx,L)$ will be better than the one of $\mathcal{I}_{\mathrm{MIF}}(\mx)$ when total number of incident fields is sufficiently large.
%\end{enumerate}
%Hence, we can explain theoretical reason that why increasing total number of incident fields guarantee better results.
%\end{rem}

\section{Conclusion}\label{sec:5}
In this contribution, we have considered the direct sampling method for imaging cracks with small length. We investigated that the indicator function of direct sampling method can be represented by the Bessel function of order zero and one, incident direction, and unit tangential at crack. Based on the investigated representation of indicator function, we explained why detection performance depends on crack length, selection of direction of propagation, and rotation of crack.

Based on the investigated structure of indicator function, we proposed two methods for improving imaging performance. To prove the fact of enhancement, we investigated that proposed indicator functions can be represented by an infinite series of Bessel functions. Several simulation results were exhibited to support our investigation and motivate further research. We have considered the imaging of small cracks in this study, extension to arc-like cracks would be a forthcoming work. Furthermore, following \cite{PKLS}, application of direct sampling method from $S-$parameter data will be a remarkable research subject. Finally, following \cite{IJZ2}, extending the problem to three dimensions would also be an interesting problem.

\section*{Acknowledgement}
This research was supported by the Basic Science Research Program of the National Research Foundation of Korea (NRF) funded by the Ministry of Education (No. NRF-2017R1D1A1A09000547).

\bibliographystyle{elsarticle-num-names}
\bibliography{../../../References}
\end{document}